\numberwithin{equation}{section}
\newtheorem{thm}{Theorem}[section]
\newtheorem{cor}[thm]{Corollary}
\newtheorem{prop}[thm]{Proposition}
\newtheorem{defn}[thm]{Definition}
\newtheorem{exam}[thm]{Example}
\DeclareMathOperator{\Ext}{Ext} \DeclareMathOperator{\Supp}{Supp}
\DeclareMathOperator{\V}{V} \DeclareMathOperator{\Hom}{Hom}
 \DeclareMathOperator{\Max}{Max}
\DeclareMathOperator{\lc}{H} 
 \DeclareMathOperator{\Spec}{Spec}
\DeclareMathOperator{\G}{\Gamma} 
\DeclareMathOperator{\E}{E} 
\DeclareMathOperator{\grade}{grade}
\DeclareMathOperator{\Ass}{Ass}
\DeclareMathOperator{\Ht}{ht}
\DeclareMathOperator{\dimSupp}{dimSupp}
\newcommand{\fa}{\mathfrak{a}}
\newcommand{\fb}{\mathfrak{b}}
\newcommand{\fm}{\mathfrak{m}}
\newcommand{\fp}{\mathfrak{p}}
\newcommand{\lo}{\longrightarrow}
\begin{document}

\title[Lower  bounds of general local cohomology modules]
{Lower  bounds of certain general local cohomology modules}

\bibliographystyle{amsplain}

\author[M. Behrouzian]{Mahmoud Behrouzian}
\address{Department of Mathematics, Faculty of Science, Arak University,
	Arak, 38156-8-8349, Iran.}
\email{m-behrooziyan@phd.araku.ac.ir}

   \author[M. Aghapournahr]{Moharram Aghapournahr}
\address{Department of Mathematics, Faculty of Science, Arak University,
Arak, 38156-8-8349, Iran.}
\email{m-aghapour@araku.ac.ir}



\keywords{Local cohomology, system of ideals, Serre subcategories,  $ETH$-cofinite modules}

\subjclass[2010]{13D45, 13E05, 14B15.}


\begin{abstract}
Let $R$ be a commutative Noetherian ring, $\Phi$ a system of ideals of $R$, $\fa \in \Phi$,
$M$  an arbitrary $R$-module and $t$ a non-negative integer. Let  $\mathcal{S}$ be a Melkersson subcategory of $R$-modules. Among other things, we prove that if $\lc^{i}_\Phi(M)$ is in $\mathcal{S}$ for all $i < t$ then  $\lc^{i}_\fa(M)$ is in $\mathcal{S}$ for all $i < t$ and  for all $\fa \in \Phi$. If $\mathcal{S}$ is the class of $R$-modules $N$ with $\dim N \leq k$
where $k \geq -1$, is an integer, then $\lc^{i}_\Phi(M)$ is in $\mathcal{S}$ for all $i < t$ (if and only if) $\lc^{i}_\fa(M)$ is in $\mathcal{S}$ for all $i < t$ and for all  $\fa \in \Phi$. As consequences we study and compare vanishing, Artinianness and support of general local cohomology and ordinary local cohomology supported at ideals of its system of ideals at initial points $i <t$. We show that $\Supp_{R}(\lc^{\dim M-1}_{\Phi}(M))$ is not necessarily finite whenever $(R,\fm)$ is local and $M$ a finitely generated $R$-module.  
\end{abstract}

\maketitle


\section{Introduction}

Throughout  this paper $R$ is a commutative Noetherian ring with non-zero
identity and $\fa$ an ideal of $R$. For an $R$-module $M$,  the $i^{th}$
local cohomology module $M$ with respect to ideal $\fa$ is defined as
\begin{center}
$\lc^{i}_{\fa}(M) \cong \underset{n}\varinjlim \Ext^{i}_{R}(R/{\fa}^{n},M).$
\end{center}





There are some generalizations of the theory of ordinary local cohomology modules.
The following is introduced by Bijan-Zadeh in \cite{BZ1}.

Let $\Phi$ be a non-empty set of ideals of $R$. We call $\Phi$ a {\it system of
	ideals} of $R$ if, whenever $\fa_1,\fa_2\in\Phi $, then there is an ideal $\fb\in\Phi$
such that $\fb\subseteq \fa_1{\fa_2}$. For such a system, for every $R$-module $M$,
one can define
$$\G_{\Phi}(M)=\{~x\in M\mid {\fa}x=0 \textmd{\ \ for some}~ \fa\in \Phi\}.$$

Then $\G_{\Phi}(-)$ is a functor from $\mathscr{C}(R)$ to itself (where
$\mathscr{C}(R)$ denotes the category of all $R$-modules and all
$R$-homomorphisms). The functor  $\G_{\Phi}(-)$ is additive, covariant,
$R$-linear and left exact. In \cite{BZ2}, $\G_{\Phi}(-)$ is denoted by $L_{\Phi}(-)$
and is called the \textquotedblleft general local cohomology functor with respect to $\Phi$\textquotedblright. For
each $i\geq 0$, the $i$-th right derived functor of $\G_{\Phi}(-)$ is denoted by
$\lc_{\Phi}^i(-)$. The functor $\lc_{\Phi}^i(-)$ and $\underset{\fa\in
	\Phi}\varinjlim\lc_{\fa}^i(-)$ (from $\mathscr{C}(R)$ to itself) are naturally equivalent
(see \cite{BZ1}). For an ideal $\fa$ of $R$, if $\Phi=\{{\fa}^n| n\in \mathbb{N}_0\}$, then
the functor $\lc_{\Phi}^i(-)$ coincides with the ordinary local cohomology functor
$\lc_{\fa}^i(-)$.
It is shown that, the study of torsion theory over $R$ is equivalent to study the
	general local cohomology theory (see \cite{BZ2}).
It is easy to see that the above definition of system of ideals and general local cohomology modules is equivalent to the \cite[Definition 2.1.10 and Notation 2.2.2]{BSh2}. General local cohomology modules  was studied  by several authors in  \cite{BZ1, BZ2, DNT, AB1, A2, A3, BSh2}.


Let $\mathcal{S}$ be a Serre subcategory of the category of all $R$-modules. The crucial points of Section 2, are Theorems \ref{2.1} and \ref{2.8} which show when the $R$-modules $\Ext_{R}^{s+t}(N , M)$
and $\Ext_{R}^{s}(N , \lc_{\Phi}^{t}(M))$ belong to $\mathcal{S}$, where $s,t$ are non-negative integers. These two theorems, which
are frequently used through the paper, enable us to demonstrate some new facts and improve some older facts about the extension functors of general local cohomology modules. Our main result in this section are Corollaries \ref{2.6}, \ref{mel}  and \ref{2.14} as follows:

\begin{cor}\textsl{{\rm(}See Corollaries \ref{2.6} and \ref{mel}{\rm)}}
	 Let $M$ be an arbitrary $R$-module,  $\mathcal{S}$ a Melkersson subcategory with respect to the any ideal $\fa \in \Phi$ and $N$ be a finitely generated   $R$-module  with $\Supp_{R}N= V(\fa)$  for some  $\fa \in \Phi $. Let $t$ be a non-negative integer such that $\Ext^{j}_R(N,\lc^{i}_\Phi(M))$ is in $\mathcal{S}$ for all   $i <  t $ and all  $j < t-i$, then $\lc^{i}_\fa(M) \in \mathcal{S}$ for all  $i < t$. In particular, if $\lc^{i}_\Phi(M)$ is in $\mathcal{S}$ for all   $i < t $, then $\lc^{i}_\fa(M) \in \mathcal{S}$ for all  $i < t$.  
\end{cor}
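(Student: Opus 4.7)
My plan has three stages: use Theorem \ref{2.1} to convert the $\Phi$-local-cohomology hypothesis into plain $\Ext$-information about $M$; use Theorem \ref{2.8} with the ordinary local cohomology $\lc^\bullet_\fa$ (which is itself the general local cohomology for the system $\Phi'=\{\fa^n: n\ge 0\}$) to translate that information back into $\Ext$-information about $\lc^i_\fa(M)$; and finally invoke the Melkersson property to promote $\Ext$-vanishing into module-theoretic vanishing.

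Concretely, applying Theorem \ref{2.1} to the assumption that $\Ext^j_R(N,\lc^i_\Phi(M))\in\mathcal{S}$ for $i<t$ and $j<t-i$ yields $\Ext^k_R(N,M)\in\mathcal{S}$ for every $k<t$. Feeding this conclusion into Theorem \ref{2.8}, applied to the system $\Phi'=\{\fa^n\}$ instead of $\Phi$, then gives $\Hom_R(N,\lc^i_\fa(M))\in\mathcal{S}$ for each $i<t$, together with the appropriate vanishing of the higher $\Ext^s_R(N,\lc^i_\fa(M))$ in the relevant range. This is the key leverage: the two theorems together convert ``$\Phi$-data'' into ``$\fa$-data'' via the common abutment $\Ext^\bullet_R(N,M)$, which is insensitive to the choice of system of ideals.

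To finish, each $\lc^i_\fa(M)$ is $\fa$-torsion and $\mathcal{S}$ is Melkersson with respect to $\fa$, so it suffices to show $(0:_{\lc^i_\fa(M)}\fa)=\Hom_R(R/\fa,\lc^i_\fa(M))\in\mathcal{S}$. Because $N$ is finitely generated with $\Supp_R N=V(\fa)$, there exist $m,n$ with $\fa^n\subseteq\Ann_R N$ and $(\Ann_R N)^m\subseteq\fa$, so a standard d\'evissage — filtering $N$ by its $\fa$-adic filtration, whose successive quotients are finitely generated $R/\fa$-modules and hence quotients of finitely generated free $R/\fa$-modules — together with the long exact sequences of $\Ext$ (which preserve $\mathcal{S}$-membership since $\mathcal{S}$ is Serre) allows one to transfer $\mathcal{S}$-membership between $\Hom_R(N,-)$ and $\Hom_R(R/\fa,-)$ on $\fa$-torsion modules. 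Melkersson then delivers $\lc^i_\fa(M)\in\mathcal{S}$. The ``in particular'' assertion is immediate: if $\lc^i_\Phi(M)\in\mathcal{S}$, then taking a finite free resolution of $N$ exhibits each $\Ext^j_R(N,\lc^i_\Phi(M))$ as a subquotient of a finite direct sum of copies of $\lc^i_\Phi(M)$, hence in $\mathcal{S}$.

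The delicate point I anticipate is the bookkeeping of index ranges: the hypothesis ``$i<t$ and $j<t-i$'' must feed into Theorem \ref{2.1} so that the resulting $\Ext$-vanishing on $M$ holds in the full range $k<t$, and this output must in turn satisfy the input hypotheses of Theorem \ref{2.8} in such a way that we recover the full range $i<t$ on the local cohomology side rather than a strict subrange. Checking that the two triangle-inequality-style ranges ``$j<t-i$'' match up across the two applications is where the main technical care is required.
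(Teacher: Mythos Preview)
Your overall strategy is right and matches the paper's: first use Theorem~\ref{2.1} (i.e.\ Corollary~\ref{2.2}) to pass from the $\Phi$-hypothesis to $\Ext^i_R(N,M)\in\mathcal{S}$ for all $i<t$, and then pass to $\lc^i_\fa(M)\in\mathcal{S}$ for $i<t$. The paper accomplishes the second step in one line by citing \cite[Theorem~2.9]{AM2008}; your plan unpacks that citation via Theorem~\ref{2.8} and the Melkersson condition, so the two arguments are the same in substance.

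There is, however, a structural gap in your second step as written. You present the argument as a linear pipeline (Theorem~\ref{2.1} $\to$ Theorem~\ref{2.8} $\to$ Melkersson), but a single application of Theorem~\ref{2.8} with $s=0$ and the system $\{\fa^n\}$ at level $i$ already requires, in hypothesis~(ii), that $\Ext^{1+j}_R(N,\lc^{i-j}_\fa(M))\in\mathcal{S}$ for $1\le j\le i$ --- that is, information about $\lc^{i'}_\fa(M)$ for $i'<i$, which is what you are trying to prove. The correct organization \emph{interleaves} the two steps: assume inductively that $\lc^{i'}_\fa(M)\in\mathcal{S}$ for $i'<i$ (whence all $\Ext^j_R(N,\lc^{i'}_\fa(M))\in\mathcal{S}$ since $N$ is finitely generated and $\mathcal{S}$ is Serre), apply Corollary~\ref{2.11} to obtain $\Hom_R(N,\lc^i_\fa(M))\in\mathcal{S}$, and then use the Melkersson property to conclude $\lc^i_\fa(M)\in\mathcal{S}$ and close the induction. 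This bootstrap is exactly what \cite[Theorem~2.9]{AM2008} packages. A smaller point: your d\'evissage is stated in the wrong direction. Filtering $N$ by its $\fa$-adic filtration lets you pass from $\Hom_R(R/\fa,-)\in\mathcal{S}$ to $\Hom_R(N,-)\in\mathcal{S}$, but you need the reverse; for that, filter $R/\fa$ instead (Gruson's lemma applies since $\Supp_R(R/\fa)=\V(\fa)=\Supp_R N$), so that the successive quotients are quotients of finite direct sums of $N$.
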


\begin{cor}\textsl{{\rm(}See Corollary \ref{2.14}{\rm)}}
	Let  $\mathcal{S}$  be the class of all $R$-modules $N$ with $ \dim_{R}N \leq k $, where  $k$ is an integer and $M$ an arbitrary $R$-module. Then the following statements are equivalent:
	\begin{itemize}
		\item[(i)]  $\lc^{i}_\Phi(M) \in \mathcal{S}$ for all $i<t$ {\rm(}for all $i \geq 0${\rm)};
		\item[(ii)]  $\lc^{i}_\fa(M) \in \mathcal{S}$ for all $i<t$ {\rm(}for all $i \geq 0${\rm)}  and all  $\fa \in \Phi$.
	\end{itemize}
\end{cor}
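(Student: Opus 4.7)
The plan is to verify that the dimension-bounded class $\mathcal{S}$ satisfies the hypotheses of Corollary \ref{mel} with respect to every $\fa \in \Phi$, then to read off (i) $\Rightarrow$ (ii) directly from that corollary and to establish (ii) $\Rightarrow$ (i) through the natural equivalence $\lc^i_\Phi(M) \cong \varinjlim_{\fa \in \Phi} \lc^i_\fa(M)$ recorded in the introduction.

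First I will check that $\mathcal{S}$ is Serre: for any short exact sequence $0 \to N' \to N \to N'' \to 0$ the identity $\Supp N = \Supp N' \cup \Supp N''$ gives $\dim N = \max\{\dim N', \dim N''\}$, so $\mathcal{S}$ is closed under submodules, quotients and extensions. For the Melkersson condition I plan to prove the support identity $\Supp N = \Supp(0:_N \fa)$ for every $\fa$-torsion module $N$, whence $(0:_N \fa) \in \mathcal{S}$ immediately forces $\dim N \le k$. The nontrivial inclusion $\Supp N \subseteq \Supp(0:_N \fa)$ uses that any $\fp \in \Supp N$ contains $\fa$ (otherwise an element of $\fa \setminus \fp$ would kill $N_\fp$), and for a nonzero $y \in N_\fp$ the least integer $r \ge 1$ with $(\fa R_\fp)^r y = 0$ yields some $\alpha \in (\fa R_\fp)^{r-1}$ with $\alpha y \ne 0$; then $\alpha y$ is a nonzero element of $(0:_{N_\fp}\fa R_\fp) = (0:_N \fa)_\fp$.

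With $\mathcal{S}$ confirmed Melkersson, (i) $\Rightarrow$ (ii) is an immediate application of Corollary \ref{mel}. For (ii) $\Rightarrow$ (i) I invoke the natural equivalence $\lc^i_\Phi(-) \cong \varinjlim_{\fa \in \Phi} \lc^i_\fa(-)$. Since $\Phi$ is directed (any two ideals are refined by a third), this is a filtered colimit; localisation commutes with filtered colimits, and a nonzero element of $(\varinjlim_{\fa} \lc^i_\fa(M))_\fp$ must come from some $\lc^i_\fa(M)_\fp \ne 0$, giving
\[
\Supp \lc^i_\Phi(M) \subseteq \bigcup_{\fa \in \Phi} \Supp \lc^i_\fa(M).
\]
Under (ii) every prime on the right satisfies $\dim R/\fp \le k$, so $\dim \lc^i_\Phi(M) \le k$ for each $i$ in the stated range. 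The ``for all $i \ge 0$'' variant follows by letting $t$ be arbitrary.

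The main technical step is the Melkersson verification via the support identity $\Supp N = \Supp(0:_N \fa)$ for $\fa$-torsion $N$; once this is in place, the rest is a direct appeal to Corollary \ref{mel} together with the routine support-of-colimit estimate.
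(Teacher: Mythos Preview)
Your proof is correct. The direction (i)$\Rightarrow$(ii) matches the paper exactly: both invoke Corollary~\ref{mel}, and your explicit verification that the dimension-bounded class is Melkersson (via the support identity $\Supp N=\Supp(0:_N\fa)$ for $\fa$-torsion $N$) is a fine supplement to the paper's bare citation of \cite[Examples~2.4 and~2.5]{AM2008}.

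For (ii)$\Rightarrow$(i) you take a genuinely different and more elementary route. The paper proves this implication via Corollary~\ref{2.13}, which in turn rests on Proposition~\ref{2.12}, Corollary~\ref{2.11}, and ultimately Theorem~\ref{2.8}; that machinery produces $\Hom_R(R/\fa,\lc^t_\Phi(M))\in\mathcal{S}$ for each $\fa\in\Phi$ and then assembles these to bound $\dim\lc^t_\Phi(M)$. You bypass all of this by using the support-of-colimit estimate $\Supp\lc^i_\Phi(M)\subseteq\bigcup_{\fa\in\Phi}\Supp\lc^i_\fa(M)$ coming from $\lc^i_\Phi(M)\cong\varinjlim_{\fa\in\Phi}\lc^i_\fa(M)$. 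This is shorter, works for each $i$ individually (not just in the range $i<t$ simultaneously), and---amusingly---is the very argument the paper itself deploys later in the proof of Theorem~\ref{cof} and in the remark preceding Corollary~\ref{2.16}. What the paper's approach buys is a demonstration that its Ext-based framework (Theorems~\ref{2.1} and~\ref{2.8}) is strong enough to recover such facts; your approach buys transparency and independence from that framework.
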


In section 3, first of all we generalize and improve \cite[Theorem 2.2]{T} (Local-global Principle for the Artinianness of local cohomology modules). Then we prove the following main result:

 \begin{cor}\textsl{{\rm(}See Corollary \ref{3.4}{\rm)}}\label{1.3}
 	Let  $M$  be an $\fa$-$ETH$-cofinite  $R$-module for all ideal $\fa \in \Phi$ or finitely generated and $t$ a positive integer.  Then the following conditions are equivalent:
 	\begin{itemize}
 		\item[(i)]  $\Supp(\lc^{i}_{\Phi}(M))  \subseteq \Max(R)$ for all $i<t$ {\rm(}for all $i\geq0${\rm)};
 		\item[(ii)]  $\Supp(\lc^{i}_{\fa}(M))  \subseteq \Max(R)$  for all $i<t$  {\rm(}for all $i\geq0${\rm)} and all $\fa \in \Phi$;
 		\item[(iii)]  $\lc^{i}_\fa(M) $  is Artinian $R$-module for all $i<t$ {\rm(}for all $i\geq0${\rm)} and all  $\fa \in \Phi$;
 	\end{itemize}			
 	when $R$ is a semi-local ring these conditions are also equivalent to: 
 	\begin{itemize}	 
 		\item[(iv)]  $\lc^{i}_\Phi(M) $  is Artinian for all $i<t$ {\rm(}for all $i\geq0${\rm)}.
 	\end{itemize}
 \end{cor}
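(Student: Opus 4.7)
The plan is to bootstrap the equivalences from Corollary~\ref{2.14} (with $k=0$) together with the local-global principle for Artinianness that opens Section~3. The clean observation that makes (i)$\Leftrightarrow$(ii) transparent is that, for any $R$-module $N$, the condition $\Supp_R(N)\subseteq\Max(R)$ is the same as $\dim_R N\leq 0$; so (i)$\Leftrightarrow$(ii) drops out of Corollary~\ref{2.14} applied to the dimension-zero class and does not use the ETH-cofiniteness or finite generation hypothesis on $M$ at all.

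The implication (iii)$\Rightarrow$(ii) is immediate, since Artinian modules have support in $\Max(R)$, so the real work lies in (ii)$\Rightarrow$(iii). Here I would fix $\fa\in\Phi$ and feed (ii) into the local-global principle for Artinianness established at the start of Section~3. For $M$ either finitely generated or $\fa$-$ETH$-cofinite for every $\fa\in\Phi$, this principle should say that $\Supp(\lc^i_\fa(M))\subseteq\Max(R)$ for all $i<t$ forces $\lc^i_\fa(M)$ to be Artinian for all $i<t$. Under the hood this is a short induction on $t$: the base step uses Melkersson's criterion (an $\fa$-cofinite or $\fa$-$ETH$-cofinite module whose support lies in $\Max(R)$ is Artinian), while the inductive step uses the long exact sequence arising from $0\to\G_\fa(M)\to M\to M/\G_\fa(M)\to 0$ together with the fact that the class of Artinian $R$-modules is a Melkersson subcategory.

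For the semi-local refinement, (iv)$\Rightarrow$(i) is immediate. For the converse I would combine (iii) with the structure of semi-local rings: $\Max(R)$ is finite, any $R$-module $N$ with $\Supp_R(N)\subseteq\Max(R)$ is $J$-torsion (for $J$ the Jacobson radical) and decomposes as $N\cong\bigoplus_{\fm\in\Max(R)}\G_\fm(N)$, and the ETH-cofiniteness or finite generation of $M$ is inherited by $\lc^i_\Phi(M)\cong\varinjlim_{\fa\in\Phi}\lc^i_\fa(M)$ in a form to which Melkersson's criterion applies; applying it maximal-ideal-by-maximal-ideal gives Artinianness of the colimit.

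The delicate step is (ii)$\Rightarrow$(iii): one must verify that the ETH-cofiniteness of $M$ is transmitted through the local cohomology functors in exactly the form Melkersson's criterion wants, and this is where the local-global principle proved earlier in Section~3 carries the technical load. The semi-local step is also subtle because a direct limit of Artinian modules is not Artinian in general; it is precisely the finiteness of $\Max(R)$ together with the cofiniteness of $M$ that rescue the argument, forcing the colimit to be locally finitely cogenerated at each maximal ideal.
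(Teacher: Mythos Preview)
Your treatment of (i)$\Leftrightarrow$(ii) and (ii)$\Leftrightarrow$(iii) is correct and matches the paper exactly: the paper invokes Corollary~\ref{2.14} for the first (which is precisely your observation that $\Supp_R(N)\subseteq\Max(R)$ means $\dim_R N\le 0$), and Theorem~\ref{th3.3} (the local--global principle for Artinianness) for the second.

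For the semi-local equivalence (iii)$\Leftrightarrow$(iv), the paper does not argue directly: it simply cites \cite[Lemma~3.2]{DFT}, \cite[Lemma~3.2]{DNT}, and \cite[Theorem~2.9~(i)$\Leftrightarrow$(ii)]{AM2008}. Your proposed route --- decompose $\lc^{i}_{\Phi}(M)\cong\bigoplus_{\fm\in\Max(R)}\Gamma_{\fm}(\lc^{i}_{\Phi}(M))$ and apply Melkersson's criterion at each maximal ideal --- is a reasonable alternative strategy, but the step you label as ``ETH-cofiniteness or finite generation of $M$ is inherited by $\lc^{i}_{\Phi}(M)$ in a form to which Melkersson's criterion applies'' is the whole difficulty, and you have not indicated how to carry it out. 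Concretely, you need $\Hom_R(R/\fm,\lc^{i}_{\Phi}(M))$ to be finitely generated for each $\fm\in\Max(R)$; since
\[
\Hom_R\bigl(R/\fm,\lc^{i}_{\Phi}(M)\bigr)\cong\varinjlim_{\fa\in\Phi}\Hom_R\bigl(R/\fm,\lc^{i}_{\fa}(M)\bigr)
\]
is a direct limit of finite-length modules, this is not automatic (and indeed fails without the semi-local hypothesis, as Example~\ref{ex} shows). You correctly flag the step as subtle, but your sketch does not say what controls the socle of the limit; the paper sidesteps this by appealing to the cited lemmas on specialization-closed sets and Melkersson subcategories rather than arguing by hand. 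If you want to keep your direct approach, you would need to supply an argument (or a reference) bounding $\Hom_R(R/\fm,\lc^{i}_{\Phi}(M))$ uniformly, which is essentially what those external lemmas provide.
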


As an application of Corollary \ref{1.3}, we prove the following theorem:

\begin{thm}\textsl{{\rm(}See Theorem \ref{cof}{\rm)}}
	Let $R$ be a semi-local Noetherian ring, $\Phi$ a system of ideals of $R$ and $M$  a finitely generated $R$-module. If $\dim R/{\fa}\leq0$ for all $\fa\in \Phi$, then $\lc^{i}_{\Phi}(M)$ is $\Phi$-cofinite Artinian for all $i\ge 0.$
\end{thm}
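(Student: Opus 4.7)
The plan is to deduce Artinianness from Corollary~\ref{1.3} and then verify $\Phi$-cofiniteness by a direct Ext computation. Since $\dim R/\fa \leq 0$ for every $\fa \in \Phi$, each $V(\fa)$ consists only of maximal ideals, so
\[
\Supp_{R}\bigl(\lc^{i}_{\fa}(M)\bigr) \subseteq V(\fa) \subseteq \Max(R)
\]
for every $i\geq 0$ and every $\fa\in\Phi$. This is precisely condition~(ii) of Corollary~\ref{1.3} for all $i\geq 0$. Since $R$ is semi-local and $M$ is finitely generated, the equivalence of (ii) and (iv) in that corollary immediately yields that $\lc^{i}_{\Phi}(M)$ is Artinian for all $i\geq 0$.

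For $\Phi$-cofiniteness I would first note that $\lc^{i}_{\Phi}(M) = \varinjlim_{\fa\in\Phi}\lc^{i}_{\fa}(M)$ is $\Phi$-torsion: an arbitrary element is represented by some $x\in\lc^{i}_{\fa}(M)$ with $\fa^{n}x=0$, and the system-of-ideals axiom, applied inductively, produces some $\fb\in\Phi$ with $\fb\subseteq\fa^{n}$, hence $\fb x=0$. Next I would show that $\Ext^{j}_{R}\bigl(R/\fa,\lc^{i}_{\Phi}(M)\bigr)$ is finitely generated for every $\fa\in\Phi$ and every $j\geq 0$. Fix such an $\fa$. The hypothesis $\dim R/\fa\leq 0$ makes $R/\fa$ a zero-dimensional Noetherian ring, hence of finite length as an $R$-module. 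Choose a resolution $F_{\bullet}\to R/\fa$ by finitely generated free $R$-modules; then each term of the complex $\Hom_{R}\bigl(F_{\bullet},\lc^{i}_{\Phi}(M)\bigr)$ is a finite direct sum of copies of the Artinian module $\lc^{i}_{\Phi}(M)$, so its cohomology $\Ext^{j}_{R}\bigl(R/\fa,\lc^{i}_{\Phi}(M)\bigr)$ is Artinian. Being also annihilated by $\fa$, it is an Artinian module over the finite-length ring $R/\fa$, hence of finite length, and in particular finitely generated.

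I do not expect a serious obstacle: the substantive work is already encapsulated in Corollary~\ref{1.3}, and what remains is the standard observation that an Artinian module annihilated by an ideal with zero-dimensional quotient is automatically cofinite with respect to that ideal. The only point requiring care is to match the paper's precise notion of $\Phi$-cofinite module, i.e.\ whether the Ext-finiteness is demanded for every $\fa\in\Phi$ or just for some $\fa\in\Phi$; the argument above delivers the stronger of these two variants and so handles either convention.
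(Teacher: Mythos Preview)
Your argument is correct. For Artinianness both you and the paper invoke Corollary~\ref{3.4} (the result restated as Corollary~\ref{1.3} in the introduction); the paper verifies condition~(i) by bounding $\Supp_R(\lc^i_\Phi(M))$ through the direct-limit description $\lc^i_\Phi(M)\cong\varinjlim_{\fa\in\Phi}\lc^i_\fa(M)$, while you verify the equivalent condition~(ii) directly from $\Supp_R(\lc^i_\fa(M))\subseteq V(\fa)\subseteq\Max(R)$ --- essentially the same move, just entering the corollary at a different node. The only genuine difference is in the cofiniteness step: the paper simply cites \cite[Corollary~2.10]{AB1}, whereas you supply the self-contained observation that each $\Ext^j_R(R/\fa,\lc^i_\Phi(M))$ is an Artinian $R$-module annihilated by $\fa$, hence an Artinian module over the zero-dimensional Noetherian ring $R/\fa$ and therefore of finite length. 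Your route avoids an external reference and in fact yields the stronger conclusion that the Ext-finiteness holds for \emph{every} $\fa\in\Phi$, not just some $\fa$ as the paper's definition of $\Phi$-cofinite requires; consequently your preliminary check that $\lc^i_\Phi(M)$ is $\Phi$-torsion is unnecessary here, though harmless.
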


In Example \ref{ex}, we show that implication $(iii) \Leftrightarrow (iv)$ of Corollary \ref{1.3} is not true in general. In Corollary \ref{cor2.15}, we show that the vanishing of $\lc^{i}_{\Phi}(M)$ for all $i < t$ (for all $i\geq 0$) is equivalent to the vanishing of $\lc^{i}_{\fa}(M)$
for all $i < t$ (for all $i\geq 0 $) and all $\fa \in \Phi$. Using this vanishing result in Corollary \ref{2.16}, we prove the following equality: 
	\begin{center}
		$\underset{i<t}\bigcup \Supp_R(\lc^{i}_{\Phi}(M))= \underset{\fa\in\Phi,i<t}\bigcup \Supp_R(\lc^{i}_{\fa}(M)).$
	\end{center}

In Section 4, we get some applications of theorems \ref{2.1} and \ref{2.8}. Then for non-negative integers $s$ and $t$, we find some sufficient conditions
for validity of the isomorphism:

\begin{center}
	$\Ext^{s}_R(N,\lc^{t}_\Phi(M)) \cong \Ext^{s+t}_R(N,M) \cong \lc^{s+t}_\fa(N,M)  \cong \Ext^{s}_R(N,\lc^{t}_\fa(M))$
\end{center}
As a consequence, we prove the following corollary:

\begin{cor}\textsl{{\rm(}See Corollary \ref{4.9}{\rm)}}
	Let $M$, $N$ be two finitely generated   $R$-modules such that $\Supp_R(N) = \V(\fa)$  for some $\fa \in \Phi $.  If $\grade(\Phi,M)=t$, then 
	\begin{center}
		$\Hom_R(N,\lc^{t}_\Phi(M)) \cong  \Ext^{t}_R(N,M) \cong  \lc^{t}_\fa(N,M) \cong \Hom_R(N,\lc^{t}_\fa(M)).$
	\end{center} 
\end{cor}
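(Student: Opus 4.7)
My plan is to derive the chain of four isomorphisms as the $s=0$ specialization of the main Section~4 isomorphism result, once the grade hypothesis has been converted into the vanishing of low-degree ordinary local cohomology.

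By definition, $\grade(\Phi,M)=t$ means that $\lc^{i}_{\Phi}(M)=0$ for every $i<t$ (with $\lc^{t}_{\Phi}(M)\ne 0$). Feeding this vanishing into Corollary~\ref{cor2.15}, I immediately obtain $\lc^{i}_{\fa}(M)=0$ for every $i<t$ and every $\fa\in\Phi$, and in particular for the specific $\fa\in\Phi$ with $\Supp_{R}(N)=\V(\fa)$; equivalently, $\grade(\fa,M)\ge t$.

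With both vanishings in hand, the desired four-term chain is the $s=0$ instance of the general isomorphism
$$\Ext^{s}_R(N,\lc^{t}_\Phi(M))\cong\Ext^{s+t}_R(N,M)\cong\lc^{s+t}_\fa(N,M)\cong\Ext^{s}_R(N,\lc^{t}_\fa(M)),$$
whose sufficient conditions are exactly what Section~4 develops from Theorems~\ref{2.1} and~\ref{2.8} (applied with the trivial Serre subcategory $\{0\}$). Unpacking directly: the outer two isomorphisms drop out of the Grothendieck composite spectral sequence $E^{p,q}_{2}=\Ext^{p}_R(N,\lc^{q}_{\fa}(M))\Rightarrow\Ext^{p+q}_R(N,M)$, available because $N$ is annihilated by a power of $\fa$ (hence $\fa$-torsion, and analogously $\Phi$-torsion since any system of ideals $\Phi$ containing $\fa$ contains an ideal inside $\fa^{n}$ for every $n$), and it collapses on the edge $q=t$ because every lower row vanishes. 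The middle isomorphism $\Ext^{t}_R(N,M)\cong\lc^{t}_{\fa}(N,M)$ follows from the stabilization of the direct system $\{\Ext^{t}_R(N/\fa^{n}N,M)\}_{n}$ at $\Ext^{t}_R(N,M)$ for large $n$, since $N$ finitely generated with $\Supp_{R}(N)=\V(\fa)$ forces $\fa^{n_{0}}N=0$ for some $n_{0}$.

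The main obstacle is the edge-morphism collapse in the two spectral sequences: one must confirm that the outgoing differentials from $E^{0,t}_{r}$ target rows strictly below $q=t$ (where everything vanishes, because $t-r+1<t$ for $r\ge 2$) and that no other $E^{p,q}_{\infty}$ with $p+q=t$ contributes to the associated filtration—both checks are immediate from the vanishing $\lc^{q}_{\fa}(M)=\lc^{q}_{\Phi}(M)=0$ for $q<t$ established in the first paragraph. Once these are in place, chaining the four isomorphisms completes the proof.
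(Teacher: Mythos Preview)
Your argument is correct, but two points deserve comment.

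First, a small slip: what you call ``by definition'' is not the paper's definition. Here $\grade(\Phi,M)$ is defined as $\inf\{\grade(\fa,M):\fa\in\Phi\}$, so $\grade(\Phi,M)=t$ gives $\grade(\fa,M)\ge t$ for every $\fa\in\Phi$, hence $\lc^{i}_{\fa}(M)=0$ for all $i<t$ and all $\fa\in\Phi$ directly; the vanishing $\lc^{i}_{\Phi}(M)=0$ for $i<t$ then follows from Corollary~\ref{cor2.15}. You run this implication in the opposite direction, which is harmless because Corollary~\ref{cor2.15} is an equivalence, but your opening sentence mislabels an equivalent characterization as the definition.

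Second, and more substantively, your route diverges from the paper's. The paper deliberately avoids spectral sequences (this is stated explicitly in the introduction) and instead reaches the four-term chain via Corollary~\ref{4.7}, which is the $s=0$ case of Corollary~\ref{3.3}; that in turn rests on Theorem~\ref{zero}, proved by the elementary inductive device of passing from $M$ to $\overline{M}=M/\Gamma_{\Phi}(M)$ and then to $L=E(\overline{M})/\overline{M}$. Your Grothendieck spectral sequence collapse is the conceptually cleaner and shorter argument for a reader who has that machinery on hand, and it makes transparent why only the vanishing below degree $t$ matters. The paper's approach buys self-containment and accessibility at the cost of a longer inductive chain; yours buys brevity at the cost of invoking a tool the paper has chosen to do without.
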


We close the paper with the example \ref{4.8} that shows, for a local ring $(R,\fm)$ and a finitely generated $R$-module
$M$, with $\dim M = d$, the set $\Supp_{R}(\lc^{d-1}_{\Phi}(M))$ is not necessarily finite.

Even though we can show some of our results by using \textbf{spectral sequences}, we are avoiding the use of this technique completely in this work and we provide more \textbf{elementary proofs} for the results.

Throughout this paper, $R$ will always be a commutative Noetherian ring with
non-zero identity, $\Phi$ a system of ideals of $R$  and $\fa$ will be an ideal of $R$.  We denote $\{\fp \in {\rm
	Spec}\,R:\, \fp\supseteq \fa \}$ by $V(\fa)$. As a convention we consider $\dim 0= -1$. 
For any unexplained notation and terminology we refer the reader to \cite{BSh2}, \cite{Mat} and \cite{BH}.




\section{Lower bounds of general local cohomology}


 Recall that a \textit{Serre subcategory} $\mathcal{S}$ of the category of $R$--modules is a subclass of $R$--modules such that for any short exact sequence\\
\centerline{$0\longrightarrow X'\longrightarrow X\longrightarrow X''\longrightarrow 0,$}\\
the module $X$ is in $\mathcal{S}$ if and only if $X'$ and $X''$ are in $\mathcal{S}$. In other words, it is closed under taking submodules, quotients and extensions.

We begin the paper with a useful theorem that is a generalization of \cite[Theorem 2.1]{ATV}.

\begin{thm}\label{2.1}
 Let $M$ be an arbitrary $R$-module,  $\mathcal{S}$ a Serre subcategory and $N$ be a $\Phi$-torsion $R$-module. If $\Ext^{t-i}_R(N,\lc^{i}_\Phi(M)) \in \mathcal{S}$ for all   $i \leq t$. Then  
$\Ext^{t}_R(N,M) \in \mathcal{S}$.

\end{thm}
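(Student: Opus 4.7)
The plan is to argue by induction on $t$. For the base case $t=0$ the conclusion is immediate: every $R$-homomorphism from the $\Phi$-torsion module $N$ into $M$ factors through $\Gamma_\Phi(M)$, so $\Hom_R(N,M)=\Hom_R(N,\Gamma_\Phi(M))$, which lies in $\mathcal{S}$ by hypothesis at $i=0$.

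For the inductive step I would first record the auxiliary fact that a $\Phi$-torsion module $M'$ is $\Gamma_\Phi$-acyclic: every $\fp\in\Ass_R(M')$ contains some $\fa\in\Phi$, so the indecomposable summands $E_R(R/\fp)$ appearing in a minimal injective resolution of $M'$ are themselves $\Phi$-torsion, $\Gamma_\Phi$ acts as the identity on the entire resolution, and hence $\lc^i_\Phi(M')=0$ for all $i\geq 1$. Set $\overline{M}=M/\Gamma_\Phi(M)$; applying this acyclicity to $\Gamma_\Phi(M)$ in the long exact cohomology sequence of $0\to\Gamma_\Phi(M)\to M\to\overline{M}\to 0$ yields $\lc^i_\Phi(\overline{M})\cong\lc^i_\Phi(M)$ for every $i\geq 1$. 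Next embed $\overline{M}$ into an injective hull $E$ and let $L=E/\overline{M}$. Because $\overline{M}$ is $\Phi$-torsion-free and essential in $E$, the module $E$ is also $\Phi$-torsion-free, so $\Gamma_\Phi(E)=0$; then every $R$-homomorphism $N\to E$ vanishes, and injectivity of $E$ forces $\Ext^j_R(N,E)=0$ for all $j\geq 0$. Consequently the sequence $0\to\overline{M}\to E\to L\to 0$ yields both $\Ext^t_R(N,\overline{M})\cong\Ext^{t-1}_R(N,L)$ and, after applying $\Gamma_\Phi$, the dimension shift $\lc^i_\Phi(L)\cong\lc^{i+1}_\Phi(\overline{M})\cong\lc^{i+1}_\Phi(M)$ for every $i\geq 0$.

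To finish, apply the induction hypothesis to $L$ at stage $t-1$: the required inputs $\Ext^{(t-1)-i}_R(N,\lc^i_\Phi(L))=\Ext^{t-(i+1)}_R(N,\lc^{i+1}_\Phi(M))$ lie in $\mathcal{S}$ for $i=0,\ldots,t-1$ by hypothesis at the indices $j=i+1\in\{1,\ldots,t\}$. Hence $\Ext^{t-1}_R(N,L)\in\mathcal{S}$ and thus $\Ext^t_R(N,\overline{M})\in\mathcal{S}$. Feeding this together with the hypothesis $\Ext^t_R(N,\Gamma_\Phi(M))\in\mathcal{S}$ (the $i=0$ case) into the long exact sequence of $0\to\Gamma_\Phi(M)\to M\to\overline{M}\to 0$, the Serre property of $\mathcal{S}$ yields $\Ext^t_R(N,M)\in\mathcal{S}$, as required. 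The main technical point is the $\Gamma_\Phi$-acyclicity of $\Phi$-torsion modules; once that is in hand, the proof is a clean diagram chase across two long exact sequences that is perfectly suited to induction on $t$.
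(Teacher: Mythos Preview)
Your proof is correct and follows essentially the same route as the paper's: induction on $t$, the same base case via $\Hom_R(N,M)\cong\Hom_R(N,\Gamma_\Phi(M))$, the same two short exact sequences $0\to\Gamma_\Phi(M)\to M\to\overline{M}\to 0$ and $0\to\overline{M}\to E(\overline{M})\to L\to 0$, and the same dimension shift to invoke the induction hypothesis on $L$. You simply spell out in more detail the facts the paper uses tacitly (that $\Phi$-torsion modules are $\Gamma_\Phi$-acyclic and that $\Gamma_\Phi(E(\overline{M}))=0$ forces $\Ext^j_R(N,E(\overline{M}))=0$).
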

\begin{proof}
We prove the result by induction on  $t$. The case $t=0$ is clear, because  if $t=0$ then,  $i=0$ so
$$\Ext^{0}_R(N,\lc^{0}_\Phi(M)) = \Hom_R(N,\Gamma_\Phi(M)) \cong \Hom_R(N,M). $$ 
Suppose that $t > 0$ and $t-1$ is settled. Let $\overline{M} = M / \Gamma_\Phi(M)$, $L = E(\overline{M}) /\overline{M}$  where $E(\overline{M})$  is the injective hull of $\overline{M}$. Since 
$\Gamma_\Phi(\overline{M}) = 0 = \Gamma_\Phi(E(\overline{M}))  $, by applying the derived functors $ \Gamma_\Phi(-)$ and $\Hom_R(N,-)$  on the short exact sequence
$$0 \longrightarrow \overline{M} \longrightarrow E(\overline{M}) \longrightarrow L \longrightarrow 0$$
we obtain for all  $i > 0$, the isomorphisms,
$$\lc^{i-1}_\Phi(L) \cong \lc^{i}_\Phi(\overline{M}) (\cong  \lc^{i}_\Phi(M)) ~~, ~~ \Ext^{i-1}_R(N,L) \cong \Ext^{i}_R(N,\overline{M}).$$
From the above isomorphisms for all $0 \leq i \leq t-1$ we have,
 $$\Ext^{(t-1)-i}_R(N,\lc^{i}_\Phi(L)) \cong \Ext^{t-(i+1)}_R(N,\lc^{i+1}_\Phi(M))$$
 which is in $\mathcal{S}$ by assumptions. Thus, from the induction hypothesis  on $L$, $\Ext^{t-1}_R(N,L) \in \mathcal{S}$.
 \\
 Now by the short exact sequence $ 0 \longrightarrow \Gamma_\Phi(M) \longrightarrow M \longrightarrow \overline{M} \longrightarrow 0 $  we get the long exact sequence
  $$\cdots \longrightarrow \Ext^{t}_{R} (N , \Gamma_\Phi(M)) \longrightarrow \Ext^{t}_{R} (N , M) \longrightarrow 
\Ext^{t}_{R} (N , \overline{M}) \longrightarrow \cdots  $$
which shows that $\Ext^{t}_R(N,M) \in \mathcal{S}$.
\end{proof}

\begin{cor}\label{2.2}
Let $M$ be an arbitrary $R$-module,  $\mathcal{S}$ a Serre subcategory and $N$ be a $\Phi$-torsion $R$-module. Let $t$ be a non-negative integer  such that $\Ext^{j}_R(N,\lc^{i}_\Phi(M)) \in \mathcal{S}$ for all $i < t$ and all $j < t-i$ Then  
$\Ext^{i}_R(N,M) \in \mathcal{S}$   for all  $i < t$.
\end{cor}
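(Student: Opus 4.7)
The plan is to derive Corollary~\ref{2.2} directly from Theorem~\ref{2.1} by a simple range argument, treating each cohomological degree $r<t$ separately. So fix an arbitrary integer $r$ with $0\le r<t$; the goal is to show $\Ext^{r}_R(N,M)\in\mathcal{S}$, at which point quantifying over $r<t$ finishes the corollary.

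To set up Theorem~\ref{2.1} with its ``$t$'' replaced by $r$, I need the input hypothesis $\Ext^{r-i}_R(N,\lc^{i}_\Phi(M))\in\mathcal{S}$ for every $i$ with $0\le i\le r$. This is where the hypothesis of the corollary gets invoked: I would check that the pair $(i,j)=(i,r-i)$ satisfies the two conditions $i<t$ and $j<t-i$ coming from the corollary's assumption. The first is immediate, since $i\le r<t$; the second follows because $j=r-i<t-i$, using $r<t$. Consequently $\Ext^{r-i}_R(N,\lc^{i}_\Phi(M))\in\mathcal{S}$ for all $0\le i\le r$, and Theorem~\ref{2.1} applied with parameter $r$ yields $\Ext^{r}_R(N,M)\in\mathcal{S}$.

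Since $r$ was an arbitrary non-negative integer strictly less than $t$, this gives the conclusion of the corollary. There is no real obstacle here; the entire content is index bookkeeping on the hypothesis, so that the range $i\le r$, $j=r-i$ required by Theorem~\ref{2.1} embeds into the range $i<t$, $j<t-i$ provided by the corollary. No further tools (no spectral sequence, no further exact sequence argument) are needed beyond what Theorem~\ref{2.1} already accomplishes.
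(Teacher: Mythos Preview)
Your proof is correct and matches the paper's intent: the paper states Corollary~\ref{2.2} without proof, as an immediate consequence of Theorem~\ref{2.1}, and your index-bookkeeping argument applying Theorem~\ref{2.1} with parameter $r$ for each $r<t$ is exactly the deduction one would supply.
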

\begin{cor}\label{2.3}
Let $M$ be an arbitrary $R$-module,  $\mathcal{S}$ a Serre subcategory and $N$ be a finite $\fa$-torsion $R$-module for some $\fa \in \Phi$. If $\Ext^{t-i}_R(N,\lc^{i}_\Phi(M))$ is in $\mathcal{S}$ for all   $i \leq t$, then $\lc^{t}_\fa(N,M) \in \mathcal{S}$.  
\end{cor}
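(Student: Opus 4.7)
The plan is to reduce this to Theorem~\ref{2.1} by identifying the generalized local cohomology module $\lc^{t}_\fa(N,M)$ with $\Ext^{t}_R(N,M)$ under the hypothesis that $N$ is a finitely generated $\fa$-torsion module.

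The first step is to verify that $N$ is $\Phi$-torsion, so that Theorem~\ref{2.1} can be applied. Given $x \in N$, there exists an integer $n$ with $\fa^n x = 0$. Since $\Phi$ is a system of ideals and $\fa \in \Phi$, iterating the defining property produces an ideal $\fb \in \Phi$ with $\fb \subseteq \fa^n$, so $\fb x = 0$. Hence $\G_\Phi(N) = N$, and the hypothesis of Theorem~\ref{2.1} is exactly what we are given. That theorem therefore yields $\Ext^{t}_R(N,M) \in \mathcal{S}$.

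The second step is the identification $\lc^{t}_\fa(N,M) \cong \Ext^{t}_R(N,M)$. By definition, $\lc^{t}_\fa(N,M) = \varinjlim_n \Ext^{t}_R(N/\fa^n N, M)$, with transition maps induced by the canonical surjections $N/\fa^{n+1}N \twoheadrightarrow N/\fa^n N$. Since $N$ is finitely generated and $\fa$-torsion, there is an $n_0$ with $\fa^{n_0} N = 0$ (take the maximum of the annihilation exponents of a finite generating set). For $n \geq n_0$ the quotient $N/\fa^n N$ equals $N$ and the transition maps become identities, so the direct limit collapses to $\Ext^{t}_R(N,M)$.

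Combining the two steps gives $\lc^{t}_\fa(N,M) \in \mathcal{S}$. There is essentially no obstacle beyond the routine check of $\Phi$-torsion and the collapse of the direct system; the real content has been offloaded onto Theorem~\ref{2.1}.
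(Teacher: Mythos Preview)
Your proof is correct and follows essentially the same approach as the paper: verify that $N$ is $\Phi$-torsion (the paper notes $N = \Gamma_\fa(N) \subseteq \Gamma_\Phi(N)$), apply Theorem~\ref{2.1} to obtain $\Ext^{t}_R(N,M) \in \mathcal{S}$, and then identify $\lc^{t}_\fa(N,M)$ with $\Ext^{t}_R(N,M)$. The only difference is that the paper cites \cite[Theorem 2.5]{VA} for this last identification, whereas you supply the direct argument that the direct system $\{\Ext^{t}_R(N/\fa^n N, M)\}_n$ stabilizes once $\fa^{n_0}N = 0$.
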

\begin{proof}
The result follows by Theorem \ref{2.1} and \cite [theorem 2.5]{VA}. Note that $N = \Gamma_\fa (N) \subseteq \Gamma_\Phi(N)$. 
\end{proof}


\begin{cor}\label{2.4}
Let $M$ be an arbitrary $R$-module,  $\mathcal{S}$ a Serre subcategory and $N$ be a finite $\fa$-torsion $R$-module for some $\fa \in \Phi$. If $\Ext^{j}_R(N,\lc^{i}_\Phi(M))$ is in $\mathcal{S}$ for all   $i < t$    and all     $j <  t-i$, then  
$\lc^{i}_{\fa}(N,M) \in \mathcal{S}$   for all  $i < t$.
\end{cor}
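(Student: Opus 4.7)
The plan is to obtain Corollary \ref{2.4} as a pointwise application of Corollary \ref{2.3}, simply by checking that the hypotheses line up for each cohomological index below $t$. The conclusion says $\lc^{k}_{\fa}(N,M)\in\mathcal{S}$ for every $k<t$, and Corollary \ref{2.3} already produces exactly one such statement (with $t$ in the role of the target degree) from a triangular collection of Ext-assumptions. So I would fix $k<t$ once and for all, and try to apply Corollary \ref{2.3} with $t$ replaced by $k$.

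The verification step is then purely numerical. Corollary \ref{2.3} applied at level $k$ requires $\Ext^{k-i}_R(N,\lc^{i}_\Phi(M))\in\mathcal{S}$ for every $i$ in the range $0\le i\le k$. Given such an $i$, set $j=k-i$; since $i\le k<t$ we have $i<t$, and since $k<t$ we get $j=k-i<t-i$. Hence the pair $(i,j)$ lies in the index set of the hypothesis of Corollary \ref{2.4}, which grants us $\Ext^{j}_R(N,\lc^{i}_\Phi(M))\in\mathcal{S}$. So all the needed Ext's are in $\mathcal{S}$, and Corollary \ref{2.3} delivers $\lc^{k}_{\fa}(N,M)\in\mathcal{S}$. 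Since $k<t$ was arbitrary, the conclusion follows.

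There is essentially no real obstacle: the result is a direct repackaging of Corollary \ref{2.3}, in the same spirit that Corollary \ref{2.2} repackages Theorem \ref{2.1}. The only thing worth being careful about is the degenerate endpoint $k=0$, where the statement reduces to $\lc^{0}_{\fa}(N,M)\in\mathcal{S}$; here the hypothesis of Corollary \ref{2.3} at level $0$ is just $\Hom_R(N,\Gamma_\Phi(M))\in\mathcal{S}$, which is provided by the $(i,j)=(0,0)$ instance of our assumption (note $0<t-0$ since $t\ge 1$ is implicit whenever the range $i<t$ is non-empty). Thus the induction/reduction is uniform across $k$, and the proof is a one-liner of the form \emph{``fix $k<t$ and apply Corollary~\ref{2.3}''} together with the index check above.
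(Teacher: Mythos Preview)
Your proposal is correct and is exactly the argument the paper has in mind: Corollary~\ref{2.4} is stated without proof because it is the immediate ``apply Corollary~\ref{2.3} for each $k<t$'' analogue of how Corollary~\ref{2.2} follows from Theorem~\ref{2.1}. Your index check is precisely the needed verification.
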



The second author of present paper and Melkersson in \cite[Definition 2.1]{AM2008} defined Melkersson subcategory with respect to an ideal as follows:

\begin{defn}\label{def}
	A full subcategory $\mathcal{S}$ of the category of  $R$-modules is said to be Melkersson  subcategory  with respect to the ideal $\fa$   if for any $\fa$-torsion  $R$-module $M$, $(0 :_{M} \fa)\in \mathcal{S}$ implies $M \in \mathcal{S}$.                                                
\end{defn}

 To see some examples of Melkersson subcategories, we refer
the reader to \cite[Examples 2.4 and 2.5]{AM2008}.


The following two corollaries are our first main results of this paper.

\begin{cor}\label{2.6}
 Let $M$ be an arbitrary $R$-module,  $\mathcal{S}$ a Melkersson subcategory with respect to the any ideal $\fa \in \Phi$ and $N$ be a finitely generated   $R$-module  with $\Supp_{R}N= V(\fa)$  for some  $\fa \in \Phi $. Let $t$ be a non-negative integer such that $\Ext^{j}_R(N,\lc^{i}_\Phi(M))$ is in $\mathcal{S}$ for all $i < t$ and all  $j < t-i$, then $\lc^{i}_\fa(M) \in \mathcal{S}$ for all  $i < t$. 
\end{cor}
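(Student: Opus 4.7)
The plan is to feed the Ext-hypothesis into Corollary~\ref{2.4} to obtain Ext-vanishing in $\mathcal{S}$, and then bridge from Ext to the ordinary local cohomology $\lc^i_\fa(M)$ using the Melkersson property. First I would observe that $N$ is $\fa$-torsion: since $N$ is finitely generated, $\Supp_R N = V(\Ann_R N)$, and combined with $\Supp_R N = V(\fa)$ this forces $\sqrt{\Ann_R N} = \sqrt{\fa}$, hence $\fa^n N = 0$ for some $n \ge 1$. Thus $N$ is $\Phi$-torsion and the hypotheses of Corollary~\ref{2.4} are met, giving $\lc^i_\fa(N,M) \in \mathcal{S}$ for all $i < t$. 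Because $N/\fa^k N = N$ for $k \gg 0$ (as $N$ is a finitely generated $\fa$-torsion module),
\[
\lc^i_\fa(N,M) \,=\, \varinjlim_k \Ext^i_R(N/\fa^k N, M) \,\cong\, \Ext^i_R(N,M),
\]
so $\Ext^i_R(N,M) \in \mathcal{S}$ for every $i < t$.

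Next I would transfer $\Ext^i_R(N,M) \in \mathcal{S}$ to $\Ext^i_R(R/\fa,M) \in \mathcal{S}$ by exploiting the $\fa$-adic filtration $N \supseteq \fa N \supseteq \fa^2 N \supseteq \cdots \supseteq \fa^n N = 0$, whose factors $\fa^{k-1}N/\fa^k N$ are finitely generated $R/\fa$-modules, each a quotient of some $(R/\fa)^{m_k}$; short-exact-sequence chases through the associated Ext long exact sequences then propagate the vanishing from $N$ to the factors and ultimately to $R/\fa$. With $\Ext^i_R(R/\fa,M) \in \mathcal{S}$ for $i < t$ in hand, I would induct on $i$ to show $\lc^i_\fa(M) \in \mathcal{S}$ for $0 \le i < t$. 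Because $\mathcal{S}$ is Melkersson with respect to $\fa$, it is enough to show $\Hom_R(R/\fa,\lc^i_\fa(M)) = (0 :_{\lc^i_\fa(M)} \fa) \in \mathcal{S}$. Replaying the splicing used in the proof of Theorem~\ref{2.1} (the sequence $0 \to \Gamma_\fa(M) \to M \to \overline{M} \to 0$ together with an injective hull of $\overline{M}$) but now for the ordinary local cohomology functor, and invoking the inductive hypothesis $\lc^j_\fa(M) \in \mathcal{S}$ for $j < i$, I see that $\Hom_R(R/\fa,\lc^i_\fa(M))$ fits into a sequence whose remaining terms are built from $\Ext^i_R(R/\fa,M)$ and the earlier $\lc^j_\fa(M)$'s, all of which already lie in $\mathcal{S}$. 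Hence $\Hom_R(R/\fa,\lc^i_\fa(M)) \in \mathcal{S}$, and the Melkersson property delivers $\lc^i_\fa(M) \in \mathcal{S}$.

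The main obstacle will be producing the comparison between $\Ext^i_R(R/\fa,M)$ and $\Hom_R(R/\fa,\lc^i_\fa(M))$ modulo $\mathcal{S}$ by an elementary long-exact-sequence chase (in keeping with the paper's stated avoidance of spectral sequences). This amounts to an inductive argument structurally parallel to Theorem~\ref{2.1} but applied to ordinary rather than general local cohomology; the filtration-based transfer between $N$ and $R/\fa$ is routine but also must be carried out carefully, since $\mathcal{S}$ is only assumed closed under submodules, quotients and extensions.
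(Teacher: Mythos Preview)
Your overall plan coincides with the paper's: first obtain $\Ext^i_R(N,M)\in\mathcal{S}$ for all $i<t$, and then convert this into $\lc^i_\fa(M)\in\mathcal{S}$ for all $i<t$. The paper does the first step via Corollary~\ref{2.2} (your route through Corollary~\ref{2.4} and $\lc^i_\fa(N,M)$ is correct but an unnecessary detour, since $N$ is $\Phi$-torsion and Corollary~\ref{2.2} already gives $\Ext^i_R(N,M)\in\mathcal{S}$ directly), and for the second step it simply quotes \cite[Theorem~2.9]{AM2008}, which packages the equivalence, in a Melkersson subcategory, between $\Ext^i_R(N,M)\in\mathcal{S}$ for $i<t$ (for some finitely generated $N$ with $\Supp_R N=V(\fa)$) and $\lc^i_\fa(M)\in\mathcal{S}$ for $i<t$.

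Your attempt to reprove this second step by hand has a genuine gap, and it is precisely in the part you describe as ``routine'', namely the transfer $\Ext^i_R(N,M)\in\mathcal{S}\Rightarrow\Ext^i_R(R/\fa,M)\in\mathcal{S}$. The $\fa$-adic filtration of $N$ runs the wrong way: it exhibits $N$ as an iterated extension of quotients of copies of $R/\fa$, so it lets one pass from $\Ext$-conditions on $R/\fa$ to $\Ext$-conditions on $N$, not the reverse. Concretely, in the long exact sequence attached to $0\to\fa^kN\to\fa^{k-1}N\to\fa^{k-1}N/\fa^kN\to 0$ the unknown terms $\Ext^{\bullet}_R(\fa^kN,M)$ obstruct the chase, and even once you reach a factor $Q$ that is a quotient of $(R/\fa)^{m}$, the sequence $0\to K\to (R/\fa)^{m}\to Q\to 0$ yields $\Ext^{i-1}_R(K,M)\to\Ext^i_R(Q,M)\to\Ext^i_R((R/\fa)^{m},M)$, which does not isolate $\Ext^i_R(R/\fa,M)$ inside $\mathcal{S}$. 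The correct device is to filter $R/\fa$, not $N$: since $\Supp_R(R/\fa)=V(\fa)=\Supp_R N$, Gruson's theorem produces a finite filtration of $R/\fa$ whose successive quotients are homomorphic images of finite direct sums of copies of $N$; an induction on $i$ (with inductive hypothesis ranging over all finitely generated $L$ with $\Supp_R L\subseteq V(\fa)$) then pushes the $\Ext$-conditions from $N$ down to $R/\fa$. With this repair your step~5 is fine and together they reproduce the relevant implication of \cite[Theorem~2.9]{AM2008}, which the paper is content to cite.
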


\begin{proof}
It follows by Corollary
\ref{2.2} and  \cite[Theorem 2.9]{AM2008}.	
\end{proof}


\begin{cor}\label{mel}
	Let $M$ be an arbitrary $R$-module,  $\mathcal{S}$ a Melkersson subcategory with respect to the any ideal $\fa \in \Phi$ and $t$ a non-negative integer. If $\lc^{i}_\Phi(M)$ is in $\mathcal{S}$ for all   $i < t $, then $\lc^{i}_\fa(M) \in \mathcal{S}$ for all  $i < t$. 
\end{cor}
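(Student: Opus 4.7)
The plan is to deduce Corollary \ref{mel} directly from Corollary \ref{2.6} by making a judicious choice of the test module $N$. Fix an arbitrary $\fa\in\Phi$ and set $N:=R/\fa$. Then $N$ is a finitely generated $R$-module with $\Supp_R N=V(\fa)$, so the hypotheses on $N$ in Corollary \ref{2.6} are satisfied. All that remains is to verify the Ext-hypothesis of that corollary: for every pair $(i,j)$ with $i<t$ and $j<t-i$, the module $\Ext^{j}_R(R/\fa,\lc^{i}_\Phi(M))$ lies in $\mathcal{S}$.

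For this step, I would use the general fact that a Serre subcategory (and hence any Melkersson subcategory, since these are Serre by definition) is closed under the functors $\Ext^{j}_R(N,-)$ when $N$ is finitely generated. Concretely, since $R$ is Noetherian and $R/\fa$ is finitely generated, there exists a resolution $F_\bullet\to R/\fa$ with each $F_j\cong R^{n_j}$ a finitely generated free module. For any $R$-module $X$ one then has $\Hom_R(F_j,X)\cong X^{n_j}$, so $\Ext^{j}_R(R/\fa,X)$ appears as a subquotient of $X^{n_j}$. Closure of $\mathcal{S}$ under finite direct sums, submodules and quotients therefore gives $X\in\mathcal{S}\Rightarrow\Ext^{j}_R(R/\fa,X)\in\mathcal{S}$ for every $j\geq 0$.

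Applying this observation with $X=\lc^{i}_\Phi(M)$, which lies in $\mathcal{S}$ by hypothesis for each $i<t$, I obtain $\Ext^{j}_R(R/\fa,\lc^{i}_\Phi(M))\in\mathcal{S}$ for all $i<t$ and all $j\geq 0$, and in particular for $j<t-i$. Corollary \ref{2.6} now applies and yields $\lc^{i}_\fa(M)\in\mathcal{S}$ for all $i<t$, which is the desired conclusion. Since $\fa\in\Phi$ was arbitrary, the proof is complete.

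There is no serious obstacle: the corollary is essentially a repackaging of Corollary \ref{2.6} specialised to $N=R/\fa$. The only mildly technical point is the Serre-closure of $\Ext^{j}_R(R/\fa,-)$, which is a routine consequence of the existence of a finitely generated free resolution of $R/\fa$ and the defining closure properties of $\mathcal{S}$.
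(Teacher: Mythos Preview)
Your proposal is correct and mirrors the paper's intended route: the paper states Corollary \ref{mel} without proof immediately after Corollary \ref{2.6}, treating it as the obvious specialization with $N=R/\fa$, and your argument supplies precisely the routine verification (closure of a Serre subcategory under $\Ext^{j}_R(R/\fa,-)$) that makes this specialization work. One small remark: the paper's Definition \ref{def} does not literally include the Serre condition, but in \cite{AM2008} a Melkersson subcategory is by definition a Serre subcategory satisfying the torsion condition, and the present paper uses it in that sense throughout (e.g.\ the proof of Corollary \ref{2.6} invokes Corollary \ref{2.2}), so your assumption is in line with the paper's conventions.
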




\begin{cor}\label{2.7}
Let $M$ be an arbitrary $R$-module and  $\mathcal{S}$ a Melkersson subcategory with respect to the any ideal $\fa \in \Phi$ then:
\begin{center}
$\inf  \lbrace i : \lc^{i}_\Phi(M) \notin \mathcal{S} \rbrace \leq \inf \lbrace \inf \lbrace i : \lc^{i}_\fa(M) \notin \mathcal{S} \rbrace :  \fa \in \Phi \rbrace.$ 
\end{center}
  \end{cor}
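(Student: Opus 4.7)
The plan is to observe that Corollary \ref{2.7} is essentially a reformulation of Corollary \ref{mel} in terms of infima. Write $t := \inf\{i : \lc^{i}_\Phi(M) \notin \mathcal{S}\}$, with the convention that $t = +\infty$ if no such $i$ exists.

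First I would dispose of the trivial case $t = +\infty$: then $\lc^{i}_\Phi(M) \in \mathcal{S}$ for every $i \ge 0$, so Corollary \ref{mel} (applied with any $t' < \infty$, or iterated) gives $\lc^{i}_\fa(M) \in \mathcal{S}$ for every $i$ and every $\fa \in \Phi$, making the right-hand side also $+\infty$, and the inequality is an equality.

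Next, for the case $t < \infty$, I would apply Corollary \ref{mel} directly: since $\lc^{i}_\Phi(M) \in \mathcal{S}$ for all $i < t$, it follows that $\lc^{i}_\fa(M) \in \mathcal{S}$ for all $i < t$ and all $\fa \in \Phi$. For each fixed $\fa \in \Phi$ this yields
\[
\inf\{i : \lc^{i}_\fa(M) \notin \mathcal{S}\} \ge t.
\]
Taking the infimum over $\fa \in \Phi$ on the left preserves the inequality, giving exactly
\[
\inf\{i : \lc^{i}_\Phi(M) \notin \mathcal{S}\} \le \inf\bigl\{\inf\{i : \lc^{i}_\fa(M) \notin \mathcal{S}\} : \fa \in \Phi\bigr\}.
\]

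There is no real obstacle here beyond properly invoking Corollary \ref{mel}; the only thing to be careful about is the degenerate case where one or both infima are $+\infty$, which is handled by the convention $\inf \emptyset = +\infty$ and monotonicity of Corollary \ref{mel} in $t$.
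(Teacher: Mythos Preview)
Your proposal is correct and matches the paper's intent: Corollary~\ref{2.7} is stated without proof precisely because it is the infimum reformulation of Corollary~\ref{mel}, which is exactly what you unpack. There is nothing further to add.
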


The following theorem is a generalization of \cite[Theorem 2.3]{ATV}.

\begin{thm}\label{2.8}
Let $N$ be a $\Phi$-torsion $R$-module, $M$  be an arbitrary $R$-module, $\mathcal{S}$ a Serre subcategory of $R$-modules and $s,t$ be non-negative integers such that
\begin{itemize}
\item[(i)] $\Ext^{s+t}_R(N,M)$  is in $\mathcal{S}$; 
\item[(ii)] $\Ext^{s+1+i}_R(N,\lc^{t-i}_\Phi(M))$  is in $\mathcal{S}$ for all $i$, $1 \leq i \leq t$; 
\item[(iii)] $\Ext^{s-1-i}_R(N,\lc^{t+i}_\Phi(M))$  is in $\mathcal{S}$  for all $i$, $1 \leq i \leq s-1$. 
\end{itemize}
Then $\Ext^{s}_R(N,\lc^{t}_\Phi(M)) \in \mathcal{S}$.
\end{thm}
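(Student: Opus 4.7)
The plan is to set up, exactly as in the proof of Theorem~\ref{2.1}, the auxiliary modules $\overline{M} = M/\Gamma_\Phi(M)$ and $L = E(\overline{M})/\overline{M}$, giving the two short exact sequences
\begin{equation*}
0 \longrightarrow \Gamma_\Phi(M) \longrightarrow M \longrightarrow \overline{M} \longrightarrow 0, \qquad 0 \longrightarrow \overline{M} \longrightarrow E(\overline{M}) \longrightarrow L \longrightarrow 0.
\end{equation*}
Since $\Gamma_\Phi(\overline{M}) = 0$ forces $\Gamma_\Phi(E(\overline{M})) = 0$ by essentiality, and since $N$ is $\Phi$-torsion, every homomorphism $N \to E(\overline{M})$ lands in $\Gamma_\Phi(E(\overline{M})) = 0$; thus $\Ext^{i}_R(N, E(\overline{M})) = 0$ for every $i \geq 0$. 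The long exact sequences attached to the second short exact sequence then produce the shift isomorphisms
\begin{equation*}
\lc^{i}_\Phi(L) \cong \lc^{i+1}_\Phi(M) \quad (i \geq 0), \qquad \Ext^{i-1}_R(N, L) \cong \Ext^{i}_R(N, \overline{M}) \quad (i \geq 1),
\end{equation*}
which are the mechanism for converting each index shift on the Ext side to an index shift on the local cohomology side.

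I would then induct on $s+t$. The base $s=t=0$ is immediate since $\Hom_R(N, \Gamma_\Phi(M)) \cong \Hom_R(N,M)$ (as $N$ is $\Phi$-torsion), which is in $\mathcal{S}$ by hypothesis~(i). For the inductive step with $t \geq 1$, the isomorphism $\lc^{t}_\Phi(M) \cong \lc^{t-1}_\Phi(L)$ reduces the claim to $\Ext^{s}_R(N, \lc^{t-1}_\Phi(L)) \in \mathcal{S}$, which I propose to obtain by invoking the inductive hypothesis on $L$ with parameters $(s, t-1)$. Conditions~(ii) and~(iii) for $L$ translate, via the shift $\lc^{j}_\Phi(L) \cong \lc^{j+1}_\Phi(M)$, directly to subsets of conditions~(ii) and~(iii) for $M$. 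Condition~(i) for $L$, namely $\Ext^{s+t-1}_R(N, L) \in \mathcal{S}$, becomes $\Ext^{s+t}_R(N, \overline{M}) \in \mathcal{S}$ after shifting; the long exact sequence of the first short exact sequence sandwiches this term between $\Ext^{s+t}_R(N, M)$, which lies in $\mathcal{S}$ by~(i), and $\Ext^{s+t+1}_R(N, \Gamma_\Phi(M)) = \Ext^{s+1+t}_R(N, \lc^{0}_\Phi(M))$, which lies in $\mathcal{S}$ by~(ii) at $i = t$.

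The delicate step is the remaining case $t = 0$ with $s \geq 1$, where the first sequence yields $\Ext^{s-1}_R(N, \overline{M}) \to \Ext^{s}_R(N, \Gamma_\Phi(M)) \to \Ext^{s}_R(N, M)$ with right-hand term in $\mathcal{S}$ by~(i); the task reduces to putting the left-hand term in $\mathcal{S}$. For $s = 1$ this is immediate since $\Hom_R(N, \overline{M}) = 0$ ($N$ is $\Phi$-torsion and $\overline{M}$ is $\Phi$-torsion-free). For $s \geq 2$, the shift isomorphism gives $\Ext^{s-1}_R(N, \overline{M}) \cong \Ext^{s-2}_R(N, L)$, and here the $s+t$-induction alone does not suffice. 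The resolution is to apply Theorem~\ref{2.1} to $L$ with exponent $s-2$; its hypothesis $\Ext^{s-2-i}_R(N, \lc^{i}_\Phi(L)) \in \mathcal{S}$ for $0 \leq i \leq s-2$ becomes, after the substitution $j = i+1$ and the isomorphism $\lc^{i}_\Phi(L) \cong \lc^{i+1}_\Phi(M)$, the condition $\Ext^{s-1-j}_R(N, \lc^{j}_\Phi(M)) \in \mathcal{S}$ for $1 \leq j \leq s-1$, which is precisely hypothesis~(iii) at $t = 0$. This appeal to the already-proven Theorem~\ref{2.1} on the shifted module $L$ is the key structural trick that closes the induction.
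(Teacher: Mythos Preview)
Your proposal is correct and follows essentially the same route as the paper. Both arguments set up the auxiliary modules $\overline{M}=M/\Gamma_\Phi(M)$ and $L=E(\overline{M})/\overline{M}$, use the resulting shift isomorphisms, and handle the base case $t=0$ by invoking Theorem~\ref{2.1}; the only cosmetic differences are that the paper inducts on $t$ rather than $s+t$ and applies Theorem~\ref{2.1} directly to $\overline{M}$ (using $\lc^{0}_\Phi(\overline{M})=0$) instead of shifting once more to $L$, which amounts to the same computation.
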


\begin{proof}
We prove by induction on $t$. Let $t=0$ and set $\overline{M} = M / \Gamma_{\Phi}(M)$. By hypothesis (iii)    
$\Ext^{(s-1)-i}_R(N,\lc^{i}_\Phi(\overline{M}))$  is in $\mathcal{S}$ for all $i$, $0 \leq i \leq s-1$. Thus from Theorem \ref{2.1}, 
$\Ext^{s-1}_R(N,\overline{M})$ belongs to $\mathcal{S}$ . Applying  the derived functor $\Hom_{R}(N , -)$ to the short exact sequence 
$$0 \longrightarrow \Gamma_{\Phi}(M)  \longrightarrow M \longrightarrow \overline{M} \longrightarrow 0$$ we obtain the long exact sequence 
$$\cdots \longrightarrow \Ext^{s-1}_R(N,\overline{M}) \longrightarrow \Ext^{s}_R(N,\Gamma_{\Phi}{M}) \longrightarrow \Ext^{s}_R(N,M) \longrightarrow \cdots$$
Since  $\Ext^{s-1}_R(N,\overline{M})$ is in $\mathcal{S}$  and by  $t=0$ in (i), $\Ext^{s}_R(N,M)$ is in $\mathcal{S}$ which   shows that  $\Ext^{s}_R(N,\Gamma_{\Phi}{M})  \in \mathcal{S}$. Now assume that $t > 0$   and that $t-1$ is settled. Let    $\overline{M} = M / \Gamma_{\Phi}(M)$, $L = E(\overline{M}) / \overline{M}$  where $E(\overline{M})$  is the injective hull of $\overline{M}$.
By applying the derived functors $ \Gamma_\Phi(-)$ and $\Hom_R(N,-)$  on the short exact sequence
$$0 \longrightarrow \overline{M} \longrightarrow E(\overline{M}) \longrightarrow L \longrightarrow 0$$ the proof sufficiently similar to that of Theorem \ref{2.1} to be omitted. We leave the proof to the reader.
\end{proof}




The following corollary is a generalization of \cite[Corollary 2.4]{ATV}.

\begin{cor}\label{2.11}
Let $M$ an arbitrary $R$-module and  $N$  a $\Phi$-torsion $R$-module. Let $\mathcal{S}$ be a Serre subcategory of $R$-modules and $t$ a non-negative integer. If $\Ext^{t+1-i}_R(N,H^{i}_\Phi(M))\in \mathcal{S}$  for all   $i < t$ and $\Ext^{t}_R(N,M) \in \mathcal{S}$    then  $\Hom_R(N,H^{t}_\Phi(M)) \in \mathcal{S}$. 
\end{cor}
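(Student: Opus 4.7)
The plan is to obtain this corollary as an immediate specialization of Theorem \ref{2.8} with $s=0$. With that choice, the target conclusion $\Ext^{s}_R(N,\lc^{t}_\Phi(M)) \in \mathcal{S}$ collapses to $\Hom_R(N,\lc^{t}_\Phi(M)) \in \mathcal{S}$, which is exactly what we want.

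Next I would verify that the three hypotheses of Theorem \ref{2.8} reduce to the hypotheses of the corollary. Hypothesis (i) with $s=0$ asks that $\Ext^{t}_R(N,M) \in \mathcal{S}$, which is one of the assumed conditions. Hypothesis (ii) with $s=0$ requires $\Ext^{1+i}_R(N,\lc^{t-i}_\Phi(M)) \in \mathcal{S}$ for $1 \leq i \leq t$; reindexing by $j = t-i$ (so that $j$ ranges over $0,1,\dots,t-1$) rewrites this as $\Ext^{t+1-j}_R(N,\lc^{j}_\Phi(M)) \in \mathcal{S}$ for all $j < t$, matching the second assumed condition exactly. Finally, hypothesis (iii) with $s=0$ involves the range $1 \leq i \leq s-1 = -1$, which is empty, so nothing needs to be checked.

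Since all applicable hypotheses of Theorem \ref{2.8} are satisfied, the theorem yields $\Hom_R(N,\lc^{t}_\Phi(M)) \in \mathcal{S}$. There is no real obstacle here, as the corollary is purely a reindexing of Theorem \ref{2.8} at the boundary case $s=0$; the only point requiring mild care is confirming the off-by-one in the reindexing $j = t-i$ and noting the vacuity of (iii).
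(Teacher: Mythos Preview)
Your proposal is correct and follows exactly the paper's approach: the paper's proof is simply ``Put $s=0$ in Theorem \ref{2.8}.'' Your verification of how the hypotheses specialize (including the reindexing $j=t-i$ and the vacuity of condition (iii)) is accurate and just spells out the details behind that one-line reduction.
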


\begin{proof}
Put $s=0$   in Theorem \ref{2.8}.
\end{proof}
\begin{prop}\label{2.12}
Let  $\mathcal{S}$   be the class of all  $R$-modules       $N$ with $ \dim_{R}N \leq k $,    where  $k$    is an integer and $M$  an arbitrary $R$-module. Let $t$ be a non-negative integer such that   
 $\Ext^{t+1-i}_R(R /\fa,\lc^{i}_\Phi(M)) \in \mathcal{S}$  for all    $i < t$ and all $\fa \in \Phi$ and  $\Ext^{t}_R(R / \fa, M) \in \mathcal{S}$ for all $\fa \in \Phi$, then  $\lc^{t}_\Phi(M) \in \mathcal{S}$.
\end{prop}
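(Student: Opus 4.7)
The plan is to reduce the statement to a pointwise application of Corollary \ref{2.11} with $N = R/\fa$ as $\fa$ ranges over $\Phi$, and then to assemble the resulting information by exploiting the fact that $\lc^t_\Phi(M)$ is $\Phi$-torsion.

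First I would verify that $\mathcal{S}$ is indeed a Serre subcategory of $R$-modules: for any short exact sequence $0 \to A \to B \to C \to 0$ one has $\Supp B = \Supp A \cup \Supp C$, so $\dim B = \max\{\dim A, \dim C\}$, which gives closure under submodules, quotients, and extensions. Next, fix an arbitrary $\fa \in \Phi$. Since $\fa$ annihilates $R/\fa$ and $\fa \in \Phi$, the module $R/\fa$ is $\Phi$-torsion. The hypotheses of the proposition specialize to the statements $\Ext^{t+1-i}_R(R/\fa, \lc^i_\Phi(M)) \in \mathcal{S}$ for all $i < t$ and $\Ext^t_R(R/\fa, M) \in \mathcal{S}$, so Corollary \ref{2.11} applies with $N = R/\fa$ and yields $\Hom_R(R/\fa, \lc^t_\Phi(M)) \in \mathcal{S}$ for every $\fa \in \Phi$.

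Finally, set $H := \lc^t_\Phi(M)$. Because $H$ is $\Phi$-torsion, every element of $H$ lies in $(0 :_H \fa) = \Hom_R(R/\fa, H)$ for some $\fa \in \Phi$, so $H = \bigcup_{\fa \in \Phi} \Hom_R(R/\fa, H)$ as $R$-modules. Taking supports gives $\Supp H = \bigcup_{\fa \in \Phi} \Supp \Hom_R(R/\fa, H)$: the inclusion $\supseteq$ comes from the inclusion of submodules, while the reverse uses that any $\fp \in \Supp H$ is witnessed by some $x \in H$ with $x/1 \neq 0$ in $H_\fp$, and that such an $x$ belongs to $\Hom_R(R/\fa, H)$ for a suitable $\fa \in \Phi$. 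Since each $\Hom_R(R/\fa, H)$ has dimension $\leq k$, we conclude $\dim R/\fp \leq k$ for every $\fp \in \Supp H$, hence $\dim H \leq k$ and $H \in \mathcal{S}$.

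The argument is essentially routine once Corollary \ref{2.11} is in hand; the one step that deserves attention is the equality $\Supp H = \bigcup_{\fa \in \Phi} \Supp \Hom_R(R/\fa, H)$, where the non-trivial direction relies on the $\Phi$-torsion property of $H$ rather than just on the direct-limit description $\lc^t_\Phi(M) = \varinjlim_{\fa \in \Phi} \lc^t_\fa(M)$.
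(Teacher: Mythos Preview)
Your proof is correct and follows essentially the same route as the paper: apply Corollary~\ref{2.11} with $N=R/\fa$ for each $\fa\in\Phi$ to get $\Hom_R(R/\fa,\lc^t_\Phi(M))\in\mathcal{S}$, then use that $\lc^t_\Phi(M)$ is $\Phi$-torsion to write it as $\bigcup_{\fa\in\Phi}(0:_{\lc^t_\Phi(M)}\fa)$ and compare supports. The paper is slightly more terse and only records the inclusion $\Supp_R\lc^t_\Phi(M)\subseteq\bigcup_{\fa\in\Phi}\Supp_R\Hom_R(R/\fa,\lc^t_\Phi(M))$, which is all that is needed, whereas you prove equality; your added verification that $\mathcal{S}$ is Serre and that $R/\fa$ is $\Phi$-torsion is welcome but does not change the strategy.
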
 
\begin{proof}
We have $\Hom_R(R /\fa,\lc^{t}_\Phi(M)) \in \mathcal{S}$   for all  $\fa \in \Phi$  by  Corollary \ref{2.11} .    Also, hence $\lc^{t}_\Phi(M)$ is a $\Phi$-torsion $R$-modules, thus
$$\lc^{t}_\Phi(M) = \bigcup _{\fa \in \Phi} (0 :_{\lc^{t}_\Phi(M)} \fa) = \bigcup _{\fa \in \Phi} \Hom_R(R / \fa,\lc^{t}_\Phi(M)). $$
Thus,  $\Supp_R \lc^{t}_\Phi(M)\subseteq \bigcup _{\fa \in \Phi} \Supp_R\Hom_R(R /\fa,\lc^{t}_\Phi(M))$, and therefore $\lc^{t}_\Phi(M) \in \mathcal{S} $.
\end{proof}

\begin{cor}
Let  $\mathcal{S}$   be the class of all  $R$-modules       $N$ with $ \dim_{R}N \leq k $,    where  $k$    is an integer and $M$  an arbitrary $R$-module. Let    
 $\Ext^{t}_R(R /\fa,M)  \in \mathcal{S}$  for all    $i < t$ and all $\fa \in \Phi$.     Then  $\lc^{i}_\Phi(M) \in \mathcal{S}$    for all $i < t$.
\end{cor}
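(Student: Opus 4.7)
My plan is to prove the corollary by induction on $t$, with Proposition \ref{2.12} supplying the engine at the inductive step. (I read the hypothesis, which appears to contain a small typo in the exponent, as: $\Ext^{i}_R(R/\fa, M) \in \mathcal{S}$ for all $i < t$ and all $\fa \in \Phi$.)

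The base case $t = 0$ is vacuous. For the inductive step, I assume the statement for $t-1$. Since the hypothesis $\Ext^{i}_R(R/\fa, M) \in \mathcal{S}$ for all $i < t$ and all $\fa \in \Phi$ trivially includes the range $i < t-1$, the induction hypothesis immediately yields $\lc^{i}_\Phi(M) \in \mathcal{S}$ for every $i < t-1$. It then remains only to establish $\lc^{t-1}_\Phi(M) \in \mathcal{S}$, and for this I would invoke Proposition \ref{2.12} with $t$ in its statement replaced by $t-1$.

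Checking the two hypotheses of Proposition \ref{2.12} in this application: the condition $\Ext^{t-1}_R(R/\fa, M) \in \mathcal{S}$ is immediate from our main assumption since $t-1 < t$. The remaining condition is that $\Ext^{t-i}_R(R/\fa, \lc^{i}_\Phi(M)) \in \mathcal{S}$ for every $i < t-1$ and every $\fa \in \Phi$. Here I would use that $\mathcal{S}$, being the class of modules of dimension at most $k$, is closed under applying $\Ext^{n}_R(R/\fa, -)$ to any object: indeed, $R$ is Noetherian and $R/\fa$ is finitely generated, so $\Ext^{n}_R(R/\fa, -)$ commutes with localization, which gives $\Supp \Ext^{n}_R(R/\fa, X) \subseteq \Supp X$ and hence $\dim \Ext^{n}_R(R/\fa, X) \leq \dim X$ for every $R$-module $X$. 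Applied to $X = \lc^{i}_\Phi(M)$, which lies in $\mathcal{S}$ by the induction hypothesis, this delivers exactly what is needed.

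Once both hypotheses of Proposition \ref{2.12} are verified, the proposition yields $\lc^{t-1}_\Phi(M) \in \mathcal{S}$, completing the induction. The argument is essentially bookkeeping on top of Proposition \ref{2.12}; the only non-formal ingredient is the support inclusion $\Supp \Ext^{n}_R(R/\fa, X) \subseteq \Supp X$ over a Noetherian ring, which I do not anticipate as a genuine obstacle.
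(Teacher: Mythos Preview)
Your proof is correct and follows essentially the same inductive strategy as the paper: iterate Proposition \ref{2.12} to climb from $\lc^{0}_\Phi(M)$ up to $\lc^{t-1}_\Phi(M)$. Your explicit verification of the hypothesis $\Ext^{t-i}_R(R/\fa,\lc^{i}_\Phi(M))\in\mathcal{S}$ via the support inclusion $\Supp_R\Ext^{n}_R(R/\fa,X)\subseteq\Supp_R X$ is a detail the paper leaves implicit.
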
 
\begin{proof}
By a similar proof   of Proposition \ref{2.12}, we can show that $\Gamma_{\Phi}(M) \in \mathcal{S}$. Now it follows 
from Proposition \ref{2.12} that $\lc_{\Phi}^{1}(M) \in \mathcal{S}$. By keeping this process we have  
$\lc_{\Phi}^{i}(M) \in \mathcal{S}$ for all $i < t$.
\end{proof}
\begin{cor}\label{2.13}
Let $M$ be an arbitrary $R$-module. If $k\geq -1$ be an integer such that  $ \dim_{R}\lc^{i}_\fa(M) \leq k$   for all   $i < t$ {\rm(}resp. for all $i${\rm)}  and all  $\fa \in \Phi$,  then  $ \dim_{R}\lc^{i}_\Phi(M) \leq k$   for all  $i < t$ {\rm(}resp. for all $i${\rm)}. 
\end{cor}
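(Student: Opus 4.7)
The plan is to exploit the isomorphism $\lc^i_\Phi(-) \cong \varinjlim_{\fa\in\Phi}\lc^i_\fa(-)$ recorded in the introduction, and to transport the dimension hypothesis on the ordinary local cohomology modules to the general one through the behaviour of support under direct limits. No further homological machinery is needed; the argument bypasses Proposition \ref{2.12} and is purely support-theoretic.

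Fix an index $i$ (with $i<t$ in the restricted case, otherwise arbitrary) and take any $\fp \in \Supp_R \lc^i_\Phi(M)$. Because localization commutes with arbitrary direct limits,
$$\bigl(\lc^i_\Phi(M)\bigr)_\fp \;\cong\; \varinjlim_{\fa\in\Phi} \bigl(\lc^i_\fa(M)\bigr)_\fp,$$
and since this limit is nonzero, at least one of its terms must be nonzero. Hence there exists $\fa\in\Phi$ with $\fp \in \Supp_R \lc^i_\fa(M)$, and by the hypothesis this gives
$$\dim R/\fp \;\leq\; \dim_R \lc^i_\fa(M) \;\leq\; k.$$

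Taking the supremum over all $\fp\in\Supp_R \lc^i_\Phi(M)$ and using the standard equality $\dim_R N = \sup\{\dim R/\fp : \fp \in \Supp_R N\}$, we conclude $\dim_R\lc^i_\Phi(M)\leq k$. The case where $\lc^i_\Phi(M)=0$ is covered uniformly by the convention $\dim 0 = -1$ fixed in the introduction, so the bound also handles $k=-1$ (which simply says $\lc^i_\Phi(M)=0$). There is no real obstacle here; the whole statement reduces to the fact that the support of a direct limit is contained in the union of the supports of its terms, and both parenthetical cases (all $i<t$, resp.\ all $i\geq 0$) are treated by the identical argument.
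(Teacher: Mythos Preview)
Your proof is correct. The key observation---that the support of a filtered colimit is contained in the union of the supports of its terms---is exactly what you need, and the paper itself invokes this same containment elsewhere (see the proof of Theorem~\ref{cof} and the paragraph preceding Corollary~\ref{2.16}).

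The paper's own proof takes a different route through the Serre-subcategory machinery of Section~2. Modulo an evident typo (the proof cites Corollary~\ref{2.13} itself), the intended argument is: since $\mathcal{S}=\{N:\dim_R N\leq k\}$ is a Melkersson subcategory and $\lc^i_\fa(M)\in\mathcal{S}$ for all $i<t$ and all $\fa\in\Phi$, \cite[Theorem~2.9]{AM2008} gives $\Ext^i_R(R/\fa,M)\in\mathcal{S}$ for all $i<t$ and all $\fa\in\Phi$; then Proposition~\ref{2.12} (iterated, via the unlabeled corollary just before~\ref{2.13}) yields $\lc^i_\Phi(M)\in\mathcal{S}$ for all $i<t$. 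Your argument bypasses the Ext detour entirely and is more elementary; the paper's approach has the virtue of fitting into the uniform framework of Theorems~\ref{2.1} and~\ref{2.8}, but for this particular corollary that framework is not actually needed.
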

\begin{proof}
The result follows by Corollary \ref{2.13} and \cite[Theorem 2.9]{AM2008}. 
\end{proof}

The following result is one of the main results of this paper.

\begin{cor}\label{2.14}
Let  $\mathcal{S}$  be the class of all $R$-modules $N$ with $ \dim_{R}N \leq k $, where  $k$ is an integer, and $M$ an arbitrary $R$-module. Then the following statements are equivalent:
\begin{itemize}
		\item[(i)]  $\lc^{i}_\Phi(M) \in \mathcal{S}$ for all $i<t$ {\rm(}for all $i \geq 0${\rm)};
		\item[(ii)]  $\lc^{i}_\fa(M) \in \mathcal{S}$ for all $i<t$ {\rm(}for all $i \geq 0${\rm)}  and all  $\fa \in \Phi$.
	\end{itemize}
\end{cor}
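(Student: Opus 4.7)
The plan is to derive both implications from results already established in Section 2. The key observation is that the class $\mathcal{S} = \{N : \dim_R N \leq k\}$ is a Serre subcategory which is Melkersson with respect to every ideal of $R$, and in particular with respect to each $\fa \in \Phi$. This is a standard verification (cf.\ \cite[Examples 2.4 and 2.5]{AM2008}): closure under sub-, quotient, and extension is immediate for a dimension bound, and the Melkersson condition holds because the support of an $\fa$-torsion $R$-module $N$ with $(0:_N\fa) \in \mathcal{S}$ is controlled by $V(\fa) \cap \Supp_R(0:_N\fa)$ together with the dimension bound on the $\fa$-socle.

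For the direction $(i) \Rightarrow (ii)$, I would apply Corollary \ref{mel} directly to this $\mathcal{S}$: assuming $\lc^{i}_\Phi(M) \in \mathcal{S}$ for all $i < t$, the Melkersson property of $\mathcal{S}$ with respect to each $\fa \in \Phi$ yields $\lc^{i}_\fa(M) \in \mathcal{S}$ for all $i < t$ and every $\fa \in \Phi$.

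For the direction $(ii) \Rightarrow (i)$, the statement is exactly Corollary \ref{2.13}, which was deduced above from Proposition \ref{2.12}. As a sanity check, a direct verification is also available: since $\lc^{i}_\Phi(M) \cong \varinjlim_{\fa \in \Phi}\lc^{i}_\fa(M)$ and support commutes with filtered colimits, one has $\Supp_R \lc^{i}_\Phi(M) = \bigcup_{\fa \in \Phi}\Supp_R \lc^{i}_\fa(M)$, whence $\dim_R \lc^{i}_\Phi(M) = \sup_{\fa \in \Phi}\dim_R \lc^{i}_\fa(M) \leq k$.

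The parenthetical ``for all $i \geq 0$'' version of the equivalence is obtained by letting $t$ range over all non-negative integers. Neither implication presents a real obstacle: the entire content is the identification of $\mathcal{S}$ as a Melkersson subcategory with respect to every $\fa \in \Phi$, after which both directions are immediate consequences of Corollaries \ref{mel} and \ref{2.13}. The only point requiring a touch of care is recording that ``Melkersson with respect to every $\fa \in \Phi$'' is precisely the hypothesis needed by Corollary \ref{mel}, and that the implicit convention $\dim 0 = -1$ makes the case $k = -1$ (forcing the relevant modules to vanish) a valid and meaningful endpoint of the statement.
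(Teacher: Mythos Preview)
Your proof is correct and follows the same route as the paper: the authors also derive the equivalence by citing Corollaries \ref{mel} and \ref{2.13}, with the underlying observation that the class $\{N:\dim_R N\le k\}$ is Melkersson with respect to every ideal. One small slip in your alternative ``sanity check'' for $(ii)\Rightarrow(i)$: support does not \emph{commute} with filtered colimits in general, only the inclusion $\Supp_R\bigl(\varinjlim_{\fa\in\Phi}\lc^{i}_{\fa}(M)\bigr)\subseteq\bigcup_{\fa\in\Phi}\Supp_R\lc^{i}_{\fa}(M)$ holds (the paper itself records only this inclusion just before Corollary \ref{2.16}); fortunately the inclusion is all you need for the dimension bound, so the argument still goes through.
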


\begin{proof}
It follows by Corollaries \ref{2.13} and \ref{mel}.	
\end{proof}




\begin{cor}\label{2.15}
Let  $\mathcal{S}$  be the class of all $R$-modules $N$ with $ \dim_{R}N \leq k $, where  $k$ is an integer, and $M$ an arbitrary $R$-module. Then:
\begin{align*}
inf \lbrace  i : \lc^{i}_\Phi(M) \notin \mathcal{S} \rbrace & =inf \lbrace  inf \lbrace i : \lc^{i}_\fa(M) \notin \mathcal{S} \rbrace :  \fa \in \Phi \rbrace\\
                                        & =inf \lbrace  inf \lbrace i : \Ext^{i}_R(R /\fa,M)\notin \mathcal{S} \rbrace :  \fa \in \Phi \rbrace.                   
\end{align*}
\end{cor}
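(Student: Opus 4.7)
The plan is to verify the two equalities of Corollary~\ref{2.15} separately, writing
$t_1 = \inf\{i : \lc^{i}_\Phi(M) \notin \mathcal{S}\}$,
$t_2 = \inf\{\inf\{i : \lc^{i}_\fa(M) \notin \mathcal{S}\} : \fa \in \Phi\}$, and
$t_3 = \inf\{\inf\{i : \Ext^{i}_R(R/\fa,M) \notin \mathcal{S}\} : \fa \in \Phi\}$.
In each case the condition ``$t < t_j$'' unfolds to a uniform membership statement for all indices $i \leq t$, so it suffices to compare those membership statements.

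The first equality $t_1 = t_2$ is immediate from Corollary~\ref{2.14}: the condition $t < t_1$ means $\lc^{i}_\Phi(M) \in \mathcal{S}$ for every $i \leq t$, and the condition $t < t_2$ means $\lc^{i}_\fa(M) \in \mathcal{S}$ for every $\fa \in \Phi$ and every $i \leq t$, and these two conditions are equivalent by Corollary~\ref{2.14}.

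For the second equality I will establish both inequalities. The direction $t_3 \leq t_1$ is precisely the content of the unlabeled corollary immediately following Proposition~\ref{2.12}: whenever $\Ext^{i}_R(R/\fa,M) \in \mathcal{S}$ for all $i < t$ and all $\fa \in \Phi$, one has $\lc^{i}_\Phi(M) \in \mathcal{S}$ for all $i < t$. For the other direction $t_1 \leq t_3$, fix $s < t_1$ and $\fa \in \Phi$, and apply Theorem~\ref{2.1} with $N = R/\fa$; note $N$ is $\fa$-torsion, hence $\Phi$-torsion since $\fa \in \Phi$. The hypothesis to check is that $\Ext^{s-i}_R(R/\fa,\lc^{i}_\Phi(M)) \in \mathcal{S}$ for all $0 \leq i \leq s$. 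Since $i \leq s < t_1$, we have $\lc^{i}_\Phi(M) \in \mathcal{S}$, i.e.\ $\dim_R \lc^{i}_\Phi(M) \leq k$, and the containment
$$\Supp_R \Ext^{s-i}_R(R/\fa,\lc^{i}_\Phi(M)) \subseteq \Supp_R \lc^{i}_\Phi(M)$$
forces $\dim_R \Ext^{s-i}_R(R/\fa,\lc^{i}_\Phi(M)) \leq k$, putting this $\Ext$ module in $\mathcal{S}$. Theorem~\ref{2.1} then yields $\Ext^{s}_R(R/\fa, M) \in \mathcal{S}$, which is valid for every $\fa \in \Phi$ and every $s < t_1$, giving $t_3 \geq t_1$.

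The main (but minor) obstacle is the verification that $\mathcal{S}$ is stable under the operation $X \mapsto \Ext^{j}_R(N,X)$ when $N$ is finitely generated; this is handled entirely by the support containment above, which uses only $\Supp \Ext^{j}_R(N,X) \subseteq \Supp X \cap \V(\Ann N)$. Once this observation is in place, the two theorems cited above supply both halves of the second equality without any further work.
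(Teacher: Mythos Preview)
Your proof is correct. The first equality $t_1 = t_2$ is handled exactly as in the paper, via Corollary~\ref{2.14}. For the second equality the paper takes a slightly different (and shorter) route: it invokes \cite[Theorem~2.9]{AM2008}, which for each fixed $\fa \in \Phi$ gives directly that $\inf\{i : \lc^{i}_\fa(M) \notin \mathcal{S}\} = \inf\{i : \Ext^{i}_R(R/\fa,M) \notin \mathcal{S}\}$ (the class $\mathcal{S}$ being Melkersson with respect to every ideal), and then one simply takes the infimum over $\fa$ to get $t_2 = t_3$. You instead prove $t_1 = t_3$ directly, using Theorem~\ref{2.1} for one inequality and the unnumbered corollary after Proposition~\ref{2.12} for the other. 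Your argument is longer but has the virtue of staying entirely within the paper's own results rather than appealing to the external reference; the underlying mechanism (passing between $\Ext$ and local cohomology via the Melkersson property and the support containment $\Supp_R \Ext^{j}_R(R/\fa,X) \subseteq \Supp_R X$) is the same in both approaches.
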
 
\begin{proof}
By Corollary \ref{2.14}, \cite[Theorem 2.9]{AM2008}.
\end{proof}

\section{Cofiniteness, Artinianness and vanishing}

 As a generalization of definition of cofinite modules with respect to an ideal (\cite[Definition
 2.1]{MEL 2005}), it is introduced in \cite[Definition 2.2]{A} the following definition.
 
 \begin{defn}
 	Let $\fa$ be an ideal of $R$, then the  $R$-module $M$ {\rm (}not necessary $\fa$-torsion{{\rm )}} is called $ETH$-cofinite with respect to $\fa$  or $\fa$-$ETH$-cofinite, if $\Ext^{i}_R(R /\fa,M)$ is a finitely generated $R$-module for all $i$. 
 \end{defn}
 
 To see the properties of this class see \cite[Remark 2.2 and Example 2.3]{AB1}.

 The following theorem which is Local-global Principle for the Artinianness of local cohomology modules generalizes and improves 
 \cite[Theorem 2.2]{T}. Tang in \cite[Theorem 2.2]{T} proved the following theorem when $M$ is finitely generated, whereas here, with a completely different proof,  we prove it for the class of $\fa$-$ETH$-cofinite modules. The implication  $(iv)\Rightarrow(ii)$ is also a generalization of \cite[Proposition 2.2 and Corokkary 2.3]{BN}.
 
 
 \begin{thm}\label{th3.3}
 	\textsl{\rm(Local-global Principle for the Artinianness of local cohomology modules).} Let $\fa$ be a proper ideal of $R$, $M$ an  $\fa$-ETH-cofinite  $R$-module and $t$ a positive integer. Then the following statements are equivalent:
 	\begin{itemize}
 		\item[(i)] $\lc^{i}_{\fa}(M)$ is Artinian for all $i<t$ {\rm(}for all $i${\rm)};
 		\item[(ii)] $\lc^{i}_{\fa}(M)$ is Artinian and $\fa$-cofinite for all $i<t$ {\rm(}for all $i${\rm)};
 		\item[(iii)] $\lc^{i}_{\fa}(M)_{\fp}$ is Artinian for all $i<t$ {\rm(}for all $i${\rm)   and all prime ideal $\fp\in \Spec(R)$};
 		\item[(iv)] $\Supp(\lc^{i}_{\fa}(M))  \subseteq \Max(R)$ for all $i<t$ {\rm(}for all $i${\rm)}.
 	\end{itemize}
 	 \end{thm}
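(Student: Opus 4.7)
The plan is to establish $(i)\Leftrightarrow(ii)$, $(i)\Leftrightarrow(iv)$, and the chain $(i)\Rightarrow(iii)\Rightarrow(iv)$. The easy pieces are quick: $(ii)\Rightarrow(i)$ is trivial; $(i)\Rightarrow(iv)$ follows since Artinian $R$-modules have support contained in $\Max(R)$; $(i)\Rightarrow(ii)$ uses Melkersson's result that any Artinian $\fa$-torsion module is automatically $\fa$-cofinite; and $(i)\Rightarrow(iii)$ holds because an Artinian $R$-module decomposes as a finite direct sum indexed by its (finite) support, so each localization is again Artinian. For $(iii)\Rightarrow(iv)$, pick $\fp \in \Supp(\lc^i_\fa(M))$ with $i<t$ and embed it in a maximal ideal $\fm$; Artinianness of $\lc^i_\fa(M)_\fm$ forces $\Supp_{R_\fm}(\lc^i_\fa(M)_\fm) \subseteq \{\fm R_\fm\}$, and since $\fp R_\fm$ lies in this support we conclude $\fp = \fm$.

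The substantial direction is $(iv)\Rightarrow(i)$, which I would prove by induction on $t$. For the base case $t=1$, hypothesis (iv) gives $\Supp(\Gamma_\fa(M)) \subseteq \Max(R)$, so the submodule $\Hom_R(R/\fa, M) = (0:_{\Gamma_\fa(M)} \fa)$ inherits support inside $\Max(R)$; being finitely generated thanks to the $\fa$-ETH-cofiniteness of $M$, it is supported on finitely many maximal ideals, hence of finite length and Artinian. Melkersson's criterion for Artinianness of $\fa$-torsion modules then yields that $\Gamma_\fa(M)$ itself is Artinian.

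For the inductive step, the base case applied to $M$ already furnishes $\Gamma_\fa(M)$ Artinian; Melkersson's theorem promotes this to $\fa$-cofiniteness, and then applying the long exact sequence of $\Ext_R(R/\fa,-)$ to $0 \to \Gamma_\fa(M) \to M \to \bar M \to 0$ with $\bar M := M/\Gamma_\fa(M)$ shows that $\bar M$ remains $\fa$-ETH-cofinite. Taking $E = E(\bar M)$ and $L = E/\bar M$, the $\fa$-torsion-freeness of $\bar M$ gives $\Gamma_\fa(E) = 0$ while injectivity of $E$ kills $\Ext^{\geq 1}_R(R/\fa, E)$; thus $L$ is again $\fa$-ETH-cofinite and satisfies $\lc^{i-1}_\fa(L) \cong \lc^i_\fa(M)$ for $i \geq 1$. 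In particular $\Supp(\lc^{i-1}_\fa(L)) \subseteq \Max(R)$ for $i-1 < t-1$, and the induction hypothesis applied to $L$ gives $\lc^i_\fa(M)$ Artinian for $1 \leq i < t$; combined with the already-proved Artinianness of $\Gamma_\fa(M)$, this closes the induction. The ``for all $i$'' variant follows by letting $t$ range over all positive integers.

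The principal technical point is the propagation of the $\fa$-ETH-cofiniteness hypothesis along the passages $M \rightsquigarrow \bar M \rightsquigarrow L$ driving the induction; this is what forces the use of Melkersson's theorem ``Artinian $\fa$-torsion module is $\fa$-cofinite'' inside the inductive step rather than only as the final output, since it is precisely what converts the Artinianness of $\Gamma_\fa(M)$ into finite generation of $\Ext^j_R(R/\fa, \Gamma_\fa(M))$ and thereby guarantees that $\bar M$ (and then $L$) inherits the cofiniteness hypothesis needed to keep the induction going.
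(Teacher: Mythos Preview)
Your proof is correct and follows essentially the same approach as the paper for the substantial direction $(iv)\Rightarrow(i)$: both argue by induction on $t$, using the $\fa$-ETH-cofiniteness to get $(0:_{\Gamma_\fa(M)}\fa)$ of finite length, invoking Melkersson's criterion to conclude $\Gamma_\fa(M)$ is Artinian and $\fa$-cofinite, and then propagating the ETH-cofiniteness hypothesis through $M\rightsquigarrow\bar M\rightsquigarrow L=E(\bar M)/\bar M$ to drive the induction.

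The only organizational difference is in handling (iii). The paper proves $(iii)\Rightarrow(ii)$ directly by a separate induction: it uses ETH-cofiniteness (and, in the inductive step, \cite[Corollary 4.4]{AM}) to see that $\Ass_R(\lc^i_\fa(M))$ is finite, writes out the injective hull $\E(\lc^i_\fa(M))=\bigoplus\mu^0(\fp_j,\lc^i_\fa(M))\E(R/\fp_j)$, and then localizes to force each $\fp_j$ maximal and each $\mu^0$ finite. Your route $(iii)\Rightarrow(iv)\Rightarrow(i)\Rightarrow(ii)$ is cleaner: the support argument for $(iii)\Rightarrow(iv)$ avoids the injective-hull bookkeeping entirely, and then you simply feed into the already-established $(iv)\Rightarrow(i)$, with $(i)\Rightarrow(ii)$ coming for free from Melkersson's theorem that Artinian $\fa$-torsion modules are $\fa$-cofinite. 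The paper's direct $(iii)\Rightarrow(ii)$ buys nothing extra, so your reorganization is a mild simplification.
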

 	\begin{proof}
 		The implications $(i) \Rightarrow (iii)$, $(ii) \Rightarrow (i)$ and  $(ii) \Rightarrow (iv)$ are obviously true. 
 		
 		To prove implication  $(iii)  \Rightarrow (ii)$, we use induction on $t$.  Let $t=1$. Since
 		\begin{center} 
 		$\Hom_{R}(R/\fa , \Gamma_{\fa} (M))= \Hom_{R}(R/\fa , M),$
 		\end{center}
 		 it follows easily that the set $\Ass_R(\G_{\fa}(M))$ is finite. Let $\Ass_R(\G_{\fa}(M))=\{\fp_1,\dots, \fp_n \}$. So we have 
 		
 		\begin{center}
 			$\E(\G_{\fa}(M))=\oplus_{i=1}^{n}\mu^0({\fp}_i,\G_{\fa}(M))\E(R/{\fp}_i)$.
 		\end{center} 
 		
 		By localizing of the above equality at each $\fp_i$ of $\Ass_R(\G_{\fa}(M))$ we get  
 			$\Ass_R(\G_{\fa}(M)) \subseteq \Max(R)$
 		
 		and $\mu^0({\fp}_i,\G_{\fa}(M))$ is finite. So it follows  $\E(\G_{\fa}(M))$ and therefore $\G_{\fa}(M)$ is an  Artinian $R$-module. Hence $\Hom_{R}(R/\fa , \Gamma_{\fa} (M))$ has finite lengths.  Now, it follows by  \cite[Proposition 4.1]{MEL 2005}, $\Gamma_{\fa} (M)$ is Artinian and $\fa$-cofinite 
 		$R$-module. Suppose $t>1$ and the case $t-1$ is settled. Since by induction hypothesis  $\lc^{i}_{\fa}(M)$ is Artinian and $\fa$-cofinite for all $i<t-1$. It follows by \cite[Corollary 4.4]{AM} that $\Hom_{R}(R/\fa , \lc^{t-1}_{\fa}(M))$ is a finitely generated $R$-module. Therefore the set $\Ass_R(\lc^{t-1}_{\fa}(M))$ is finite. The rest of proof is the same as the case $t=1$. Therefor  $\lc^{t-1}_{\fa}(M)$ is an Artinian and $\fa$-cofinite $R$-module, this complete the inductive step.
 		
 		To prove the implication  $(iv) \Rightarrow (ii)$ again we use  induction on $t$. Let $t=1$. Since $\Supp_R( \Gamma_{\fa} (M)) \subseteq \Max(R)$, thus
 		 	$\Supp_{R}(\Hom_{R}(R/\fa , \Gamma_{\fa} (M))) \subseteq \Max(R)$. On the other hand since 
 		$ \Hom_{R}(R/\fa , \Gamma_{\fa} (M))  \cong \Hom_{R}(R/\fa , M)$, so   
 		$\Hom_{R}(R/\fa , \Gamma_{\fa} (M))$ is finitely generated $R$-module. It is easy to see that  $\Hom_{R}(R/\fa , \Gamma_{\fa} (M))$ is an Artinian $R$-module and therefore $\Hom_{R}(R/\fa , \Gamma_{\fa} (M))$ has finite lengths. It follows by  \cite[Proposition 4.1] {MEL 2005} that $\Gamma_{\fa} (M)$ is Artinian and $\fa$-cofinite 
 		$R$-module.
 		Now, assume that $t > 1$ and the case $t-1$ is settled. Consider $\overline{M}= M/\Gamma_{\fa}(M)$. Since 
 		$\Gamma_{\fa}(M) =  \lc^{0}_{\fa}(M)$   is Artinian and $\fa$-cofinite $R$-module, it is easy to see that $M$ is 
 		$\fa$-$ETH$-cofinite if and only if  $\overline{M}$ is  $\fa$-$ETH$-cofinite. Further, note that   $ \lc^{i}_{\fa}(\overline{M}) \cong \lc^{i}_{\fa}(M), ~~ i>0$. Thus we may assume $M$ is $\fa$-torsion-free and so   $\G_{\fa}(M) = 0$. Let $E$ be an injective hall of $M$ and put $L=E/M$. Then also $\Gamma_{\fa}(E)=0$. Therefore the exact sequence $0 \longrightarrow M \longrightarrow E \longrightarrow L \longrightarrow 0,$	implies $\lc^{i}_{\fa}(L)\cong \lc^{i+1}_{\fa}(M)$ and
 		$\Ext^{i}_R(R /\fa,L)\cong \Ext^{i+1}_R(R /\fa,M)$ for all $i\geq 0$. It follows that $L$ is an $\fa$-$ETH$-cofinite $R$-module and $\Supp_{R}(\lc^{i}_{\fa}(L)) \subseteq \Max(R)$ for all $i < t-1$. Therefore by induction hypothesis  $\lc^{i}_{\fa}(L)$ is Artinian and $\fa$-cofinite $R$-module for all $i < t-1$. The isomorphism $\lc^{i}_{\fa}(L)\cong \lc^{i+1}_{\fa}(M)$ shows that $\lc^{i}_{\fa}(M)$ is Artinian and $\fa$-cofinite $R$-module for all $i < t$. This complete the inductive step.
 		\end{proof}


\begin{cor}\label{sez3.9}
Let $\fb\subseteq\fa$ be two ideals of a Noetherian ring $R$, $M$ be a $\fb$-cofinite $R$-module and $t$ be a positive integer. Then the above theorem holds.
\end{cor}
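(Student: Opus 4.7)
The plan is to reduce Corollary \ref{sez3.9} directly to Theorem \ref{th3.3}. The only hypothesis on $M$ in Theorem \ref{th3.3} is that it be $\fa$-$ETH$-cofinite; all four conditions (i)--(iv) concern only $\lc^{i}_{\fa}(M)$ and its support. So it suffices to verify that a $\fb$-cofinite $R$-module $M$ with $\fb\subseteq\fa$ is automatically $\fa$-$ETH$-cofinite, after which Theorem \ref{th3.3} applied to $M$ and $\fa$ gives the required equivalences verbatim.

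To establish that $M$ is $\fa$-$ETH$-cofinite, I would argue as follows. From $\fb\subseteq\fa$ we obtain $V(\fa)\subseteq V(\fb)$, and hence $\Supp_R(R/\fa)=V(\fa)\subseteq V(\fb)$. Thus $R/\fa$ is a finitely generated $R$-module whose support lies in $V(\fb)$. Now I would invoke the standard transfer property of cofinite modules due to Melkersson \cite{MEL 2005}: if $M$ is $\fb$-cofinite and $N$ is a finitely generated $R$-module with $\Supp_R N\subseteq V(\fb)$, then $\Ext^{i}_{R}(N,M)$ is finitely generated for every $i\geq 0$. Applied with $N=R/\fa$, this yields that $\Ext^{i}_{R}(R/\fa,M)$ is finitely generated for every $i\geq 0$, which is precisely the defining condition for $M$ to be $\fa$-$ETH$-cofinite.

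The only subtlety is in the invocation of Melkersson's Ext-finiteness transfer property; the standard proof first reduces to the case $N=R/\fb^{n}$ using the short exact sequence $0\to\fb^{n-1}N/\fb^{n}N\to N/\fb^{n}N\to N/\fb^{n-1}N\to 0$ (with the left-hand term annihilated by $\fb$), and then performs a double induction on $n$ and on the number of generators of $N$, bootstrapping off the assumption $\Ext^{i}_{R}(R/\fb,M)\in \text{f.g.}$ This being a well-known ingredient of cofiniteness theory, no genuine obstacle is expected, and the corollary then drops out of Theorem \ref{th3.3}.
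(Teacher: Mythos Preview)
Your proposal is correct and follows essentially the same route as the paper: both observe that $\Supp_R(R/\fa)=V(\fa)\subseteq V(\fb)$, invoke Melkersson's Ext-finiteness transfer (the paper cites \cite[Corollary 2.5]{MEL 2005}) to conclude that $\Ext^{i}_{R}(R/\fa,M)$ is finitely generated for all $i\geq 0$, and then apply Theorem~\ref{th3.3}. Your extra paragraph sketching the proof of the transfer property is unnecessary but harmless.
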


\begin{proof}
	Note that $\Supp_R(R/{\fa})=\V(\fa)\subseteq\V(\fb)$,  therefore ${\Ext}^{i}_R(R/\fa,M)$ is finitely generated for all $i\geq 0$ by \cite[Corollary 2.5]{MEL 2005}. Now the assertion follows by Theorem \ref{th3.3}.
\end{proof}

 			

 
The following corollary that is one of our main results, shows that the Artinianness of general local 
cohomology  at initial points is  not necessarily  equivalent to the Artinianness  of its ordinary local cohomology in general and it needs some more conditions.

 \begin{cor}\label{3.4}
 	Let  $M$  be an $\fa$-$ETH$-cofinite  $R$-module for all ideal $\fa \in \Phi$ or finitely generated and $t$ a positive integer.  Then the following statements are equivalent:
 	\begin{itemize}
 		\item[(i)]  $\Supp(\lc^{i}_{\Phi}(M))  \subseteq \Max(R)$ for all $i<t$ {\rm(}for all $i\geq0${\rm)};
 		\item[(ii)]  $\Supp(\lc^{i}_{\fa}(M))  \subseteq \Max(R)$  for all $i<t$  {\rm(}for all $i\geq0${\rm)} and all $\fa \in \Phi$;
 		\item[(iii)]  $\lc^{i}_\fa(M) $  is Artinian $R$-module for all $i<t$ {\rm(}for all $i\geq0${\rm)} and all  $\fa \in \Phi$; 
 	\end{itemize}			
 	when $R$ is a semi-local ring these conditions are also equivalent to: 
 	\begin{itemize}	 
 	\item[(iv)]  $\lc^{i}_\Phi(M) $  is Artinian for all $i<t$ {\rm(}for all $i\geq0${\rm)}.
 	\end{itemize}
 \end{cor}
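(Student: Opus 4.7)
My plan is to handle this in three parts: establishing (i) $\Leftrightarrow$ (ii) $\Leftrightarrow$ (iii) in full generality, and then upgrading to include (iv) under the semi-local hypothesis.

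For (i) $\Leftrightarrow$ (ii) I invoke Corollary \ref{2.14} with $k = 0$. Since the convention $\dim 0 = -1$ is in force, the Serre subcategory $\{N : \dim_{R} N \leq 0\}$ coincides with the class of modules whose support lies in $\Max(R)$, so Corollary \ref{2.14} directly delivers the equivalence. For (ii) $\Leftrightarrow$ (iii) I apply the Local--Global Principle for Artinianness (Theorem \ref{th3.3}) separately to each $\fa \in \Phi$; the hypothesis on $M$ is exactly what Theorem \ref{th3.3} requires---$M$ is $\fa$-$ETH$-cofinite for every $\fa \in \Phi$, either by assumption or automatically from finite generation---and quantifying the resulting $\fa$-wise equivalence yields (ii) $\Leftrightarrow$ (iii).

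For the semi-local refinement, (iv) $\Rightarrow$ (i) is immediate since any Artinian $R$-module has support in $\Max(R)$. For the converse I would use Melkersson's criterion combined with the structure of semi-local rings. Writing $\Max(R) = \{\fm_{1},\dots,\fm_{s}\}$ and $\mathcal{J} = \bigcap_{j}\fm_{j}$, we know from (i) that $\lc^{i}_{\Phi}(M)$ has support in $V(\mathcal{J})$ and is therefore $\mathcal{J}$-torsion, so it decomposes canonically as $\lc^{i}_{\Phi}(M) \cong \bigoplus_{j=1}^{s} \G_{\fm_{j}}(\lc^{i}_{\Phi}(M))$. By Melkersson's criterion (the Artinian class is Melkersson with respect to every $\fm_{j}$), Artinianness of each summand $\G_{\fm_{j}}(\lc^{i}_{\Phi}(M))$ is equivalent to finite-dimensionality of $\Hom_{R}(R/\fm_{j}, \lc^{i}_{\Phi}(M))$ over $R/\fm_{j}$. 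Since $\Hom_{R}(R/\fm_{j}, -)$ commutes with the filtered colimit $\lc^{i}_{\Phi}(M) = \varinjlim_{\fa \in \Phi}\lc^{i}_{\fa}(M)$, and each $\Hom_{R}(R/\fm_{j}, \lc^{i}_{\fa}(M))$ is finite-dimensional as a submodule of the Artinian module $\lc^{i}_{\fa}(M)$ supplied by (iii), the remaining content is a uniform dimension bound as $\fa$ varies over $\Phi$.

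The main obstacle is precisely this uniform bound: directed colimits of finite-dimensional vector spaces can be infinite-dimensional, so the semi-local hypothesis must enter non-trivially. My first attempt would be to combine Corollary \ref{2.11} (or Theorem \ref{2.8}) with the $\fa$-$ETH$-cofiniteness of $M$ to control $\Hom_{R}(R/\fm_{j}, \lc^{i}_{\Phi}(M))$ in terms of the finitely generated modules $\Ext^{*}_{R}(R/\fa, M)$; this should let the $\fm_{j}$-socle of $\lc^{i}_{\Phi}(M)$ be dominated by a single finitely generated $R$-module independent of $\fa$, producing the required uniform bound and completing the argument.
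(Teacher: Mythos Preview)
Your treatment of $(i)\Leftrightarrow(ii)$ and $(ii)\Leftrightarrow(iii)$ coincides exactly with the paper's proof: it too invokes Corollary~\ref{2.14} (with $k=0$) and Theorem~\ref{th3.3}.

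For the semi-local equivalence with (iv) the paper does \emph{not} argue directly: it simply cites \cite[Lemma~3.2]{DFT}, \cite[Lemma~3.2]{DNT}, and \cite[Theorem~2.9]{AM2008} and is done. Your route---decompose $\lc^{i}_{\Phi}(M)$ over the finitely many maximal ideals, then control each $\fm_{j}$-socle via Corollary~\ref{2.11}---is genuinely different and stays within the paper's own toolkit. It does work, but two points you left tentative must be made explicit. First, Corollary~\ref{2.11} with $N=R/\fm_{j}$ has hypotheses involving $\Ext^{\ast}_{R}(R/\fm_{j},\lc^{k}_{\Phi}(M))$ for $k<i$; these are only known to be finitely generated once $\lc^{k}_{\Phi}(M)$ is already Artinian, so you need an outer induction on the cohomological degree $i$ rather than a single application. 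Second, the input $\Ext^{i}_{R}(R/\fm_{j},M)$ finitely generated is not literally the $\fa$-$ETH$-cofiniteness hypothesis: one must pass from $\Ext^{\ast}_{R}(R/\fa,M)$ finitely generated to $\Ext^{\ast}_{R}(R/\fm_{j},M)$ finitely generated for $\fm_{j}\supseteq\fa$, which is a standard filtration/d\'evissage argument but a separate step (and for those $\fm_{j}$ lying over no $\fa\in\Phi$, the summand $\Gamma_{\fm_{j}}(\lc^{i}_{\Phi}(M))$ vanishes outright, so Corollary~\ref{2.11} is not needed there). With these two additions your direct argument is complete; compared to the paper's citation-only proof, it has the advantage of being self-contained and of showing precisely how the semi-local hypothesis and the $ETH$-cofiniteness enter.
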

 \begin{proof}
 	$(i) \Leftrightarrow (ii)$  follows by Corollary \ref{2.14}.\\
 	$(ii) \Leftrightarrow (iii)$  follows by Theorem \ref{th3.3}.\\
 	$(iii) \Leftrightarrow (iv)$  follows by \cite[Lemma 3.2]{DFT}, \cite[Lemma 3.2]{DNT} and \cite[Theorem 2.9 $(i) \Leftrightarrow (ii)$]{AM2008}.
 \end{proof}
 
 
The following definition is a generalization of the concept $\fa$-cofinite modules which was introduced by Hartshorne.
\begin{defn}[\cite{AKS}]
	Let $\Phi$ be a system of ideals of $R$ and $X$ an $R$-module. The general local cohomology
	module $\lc^j_\Phi(X)$ is called to be $\Phi$-cofinite if there exists an ideal $\fa \in \Phi$ such that
	$\Ext^i_R(R/\fa, \lc^j_\Phi(X))$ is finitely generated for all $i.$
\end{defn}

\begin{thm}\label{cof}
Let $R$ be a semi-local Noetherian ring, $\Phi$ a system of ideals of $R$ and $M$  a finitely generated $R$-module. If $\dim R/{\fa}\leq0$ for all $\fa\in \Phi$, then $\lc^{i}_{\Phi}(M)$ is $\Phi$-cofinite Artinian for all $i\ge 0.$
\end{thm}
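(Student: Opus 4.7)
The plan has two parts: first I show that every $\lc^i_\Phi(M)$ is Artinian by invoking Corollary \ref{3.4}, and then I upgrade Artinianness to $\Phi$-cofiniteness by isolating, for each $j$, a single ideal $\fb\in\Phi$ with respect to which $\lc^j_\Phi(M)$ is simultaneously torsion and cofinite.

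For the Artinianness part, the semi-local hypothesis together with $\dim R/\fa\le 0$ makes each $R/\fa$ ($\fa\in\Phi$) a zero-dimensional semi-local Noetherian ring, hence Artinian, so in particular $V(\fa)\subseteq\Max(R)$. Since $M$ is finitely generated, this forces $\Supp_R(\lc^i_\fa(M))\subseteq V(\fa)\subseteq\Max(R)$ for all $\fa\in\Phi$ and all $i\ge 0$, which is condition (ii) of Corollary \ref{3.4}. The equivalence $(ii)\Leftrightarrow(iv)$ in that corollary, which is the place where the semi-local hypothesis is used, then yields that $\lc^i_\Phi(M)$ is Artinian for every $i\ge 0$.

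For the $\Phi$-cofiniteness part, I fix $j$ and set $A=\lc^j_\Phi(M)$. Artinianness forces $\Supp_R(A)$ to be a finite set of maximal ideals, say $\{\fm_1,\dots,\fm_k\}$. For each $i$ I pick $x_i\in A$ with non-zero image in $A_{\fm_i}$ and, using $\Phi$-torsion of $A$, choose $\fa_i\in\Phi$ with $\fa_i\subseteq\Ann_R(x_i)\subseteq\fm_i$. Iterating the defining property of a system of ideals produces $\fb\in\Phi$ with $\fb\subseteq\fa_1\fa_2\cdots\fa_k\subseteq\fm_i$ for every $i$, so $\Supp_R(A)\subseteq V(\fb)$ and therefore $A=\Gamma_{\fb}(A)$. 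Finally, $(0:_A\fb)=\Hom_R(R/\fb,A)$ is an Artinian module over the Artinian ring $R/\fb$, hence of finite length; \cite[Proposition 4.1]{MEL 2005} then forces the $\fb$-torsion module $A$ to be $\fb$-cofinite, which is the desired $\Phi$-cofiniteness.

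The main obstacle is precisely the passage from element-by-element $\Phi$-torsion of $A$ to uniform $\fb$-torsion for a single $\fb\in\Phi$: this is what forces one to combine the semi-local hypothesis (used through Artinianness to give a \emph{finite} support for $A$) with the directedness built into the definition of a system of ideals (used to amalgamate the finitely many $\fa_i$ into one $\fb$). Once this $\fb$ is in hand, Melkersson's criterion turns the finite-length socle into full $\fb$-cofiniteness essentially for free.
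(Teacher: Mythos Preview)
Your proof is correct. For the Artinianness part you and the paper do the same thing: both reduce to Corollary~\ref{3.4} via the observation that $\dim R/\fa\le 0$ forces $\V(\fa)\subseteq\Max(R)$, hence the supports of the relevant local cohomology modules lie in $\Max(R)$; you enter the equivalences at condition~(ii) while the paper enters at~(i), but this is cosmetic.

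The genuine difference is in the $\Phi$-cofiniteness step. The paper simply invokes \cite[Corollary~2.10]{AB1} as a black box once Artinianness is known. You instead give a direct, self-contained argument: from Artinianness you extract a \emph{finite} support $\{\fm_1,\dots,\fm_k\}$, use $\Phi$-torsion plus the directedness of a system of ideals to manufacture a single $\fb\in\Phi$ with $\Supp_R(A)\subseteq\V(\fb)$ (hence $A=\Gamma_\fb(A)$), and then observe that $(0:_A\fb)$ is an Artinian module over the Artinian ring $R/\fb$, so of finite length, whence Melkersson's \cite[Proposition~4.1]{MEL 2005} gives $\fb$-cofiniteness. This buys you transparency and independence from the cited reference, and it makes explicit exactly how the two hypotheses interact: semi-locality (through Artinianness) supplies the finiteness of the support, while the system-of-ideals axiom supplies the amalgamation into one $\fb$. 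The paper's route is shorter on the page but hides this mechanism inside the citation.
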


\begin{proof}
	Since  by \cite[Lemma 2.1]{BZ1}, $$\lc^{i}_\Phi(M)\cong\underset { \fa \in \Phi}
	\varinjlim \lc^{i}_\fa(M),$$ it is easy to see that $\Supp_R(\lc^{i}_\Phi(M))
	\subseteq\underset{\fa \in \Phi}\bigcup \Supp_R(\lc^{i}_\fa(M))$ and therefore
	$$\dimSupp \lc^{i}_\Phi(M) \leq \sup \{ \dimSupp \lc^{i}_{\fa}(M)| \fa \in
	\Phi \} \leq 0,$$ thus $\Supp_R(\lc^{i}_\Phi(M))\subseteq \Max(R)$. Now, it follows by Corollary \ref{3.4} that $\lc^{i}_\Phi(M)$ is an Artinian $R$-module for all $i\geq 0$. Therefore \cite[Corollary 2.10]{AB1} implies that $\lc^{i}_\Phi(M)$ is $\Phi$-cofinite for all $i\geq 0$.
\end{proof}


 The following example shows that  in Corollary \ref{3.4} the Artinianness of$~\lc_{\fa}^{i}(M)$  for all $\fa \in \Phi$ and
 $i < t$ for a non-negative integer $t$ or  for all $i \geq 0$, may not be  equivalent to the Artinianness of general local cohomology module $\lc_{\Phi}^{i}(M)$ for all 
 $i < t$ or  for all $i \geq 0$.

 \begin{exam}\label{ex} Let $R$  be a Gorenstein ring of finite dimension $d$ such that has infinite maximal ideal with
 	$\Ht \fm = d$. Let 
 	\begin{center}
 	$\Psi = \{\fm \in \Max(R) \vert \Ht \fm = d  \}$ and  $\Phi=\{\fa | \fa \text{ is an ideal of } R    \text{ and } \dim{R/{\fa}}\le 0 \}.$
 	\end{center}

 	Then, it is easy to see that $\Phi$ is a system of ideals of $R$ and $\Psi\subseteq \Phi$. By \cite[Theorem 18.8]{Mat}, the minimal injective resolution of $R$ has the form 
 	
 	\begin{center}
 		$0\lo\underset{\Ht_R{\fp}=0}\bigoplus\E(R/\fp)\lo\underset{\Ht_R{\fp}=1}\bigoplus\E(R/\fp)\lo\dots\lo\underset{\Ht_R{\fp}=d}\bigoplus\E(R/\fp)\lo 0$
 	\end{center}
 	
 Applying the functor $\G_{\Phi}(-)$  to the above injective resolution,  we can deduce that	
 	 
 	\begin{equation*}
 	\lc^{i}_{\Phi}(R)= \left\{
 	\begin{array}{cc}
 	\underset{\fm \in \Psi}\bigoplus E(R/ \fm)   & \text{if}     ~~~~    i = d,\\
 	0                             & \text{if}       ~~~~  i \neq d.
 	\end{array}
 	\right.
 	\end{equation*}
 	Clearly $\lc_{\Phi}^{d}(R)$ is not Artinian while it is easy to see that $\lc_{\fa}^{i}(R)$ is Artinian for all $\fa \in \Phi$
 	and all $i \geq 0$. 
 \end{exam}

The following corollary shows unlike the Artinianness, vanishing of general local cohomology modules in initial points is equivalent to the vanishing of its ordinary local cohomology.

\begin{cor}\label{cor2.15}
	Let $M$ be an arbitrary $R$-module, $\Phi$ be a system of ideals of $R$ and $t$ be a non-negative integer.  Then the following statements are equivalent:
	\begin{itemize}
		\item[(i)]  $\lc^{i}_\Phi(M)=0$ for all $i<t$ {\rm(}for all $i \geq 0${\rm)};
		\item[(ii)]  $\lc^{i}_\fa(M)=0$ for all $i<t$ {\rm(}for all $i \geq 0${\rm)}  and all  $\fa \in \Phi$.
	\end{itemize}
\end{cor}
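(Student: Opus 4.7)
The plan is to derive this corollary as an immediate specialization of Corollary \ref{2.14}, using the convention $\dim 0 = -1$ fixed in the introduction. With $k = -1$, the class $\mathcal{S}$ of all $R$-modules $N$ with $\dim_R N \le -1$ consists of precisely the zero module. Therefore the statement ``$\lc^{i}_\Phi(M) \in \mathcal{S}$'' is literally ``$\lc^{i}_\Phi(M) = 0$,'' and similarly for the ordinary local cohomology; so (i)$\Leftrightarrow$(ii) in the present corollary is exactly (i)$\Leftrightarrow$(ii) in Corollary \ref{2.14} for $k=-1$.

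If one prefers a self-contained proof, the two directions can also be argued separately. For (ii)$\Rightarrow$(i), I would invoke the natural isomorphism $\lc^{i}_\Phi(M) \cong \varinjlim_{\fa \in \Phi} \lc^{i}_\fa(M)$ recalled in the introduction; since every term in the directed system vanishes, so does the colimit. For (i)$\Rightarrow$(ii), I would verify that $\mathcal{S} = \{0\}$ is a Melkersson subcategory with respect to every ideal $\fa \in \Phi$: it is clearly a Serre subcategory, and if $M$ is $\fa$-torsion with $(0:_M \fa) = 0$, then any nonzero $x \in M$ would, after multiplying by a suitable power $\fa^{n-1}$, produce a nonzero element of $(0:_M \fa)$, a contradiction. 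Hence Corollary \ref{mel} applies and yields $\lc^{i}_\fa(M) = 0$ for all $i < t$ and all $\fa \in \Phi$.

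For the parenthetical ``for all $i \ge 0$'' version, I would simply let $t \to \infty$ (equivalently, apply the finite version for every $t$), and note that both conditions then become statements about all $i$.

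I do not foresee a genuine obstacle here: the only subtle point is making sure that the ``degenerate'' choice $k = -1$ in Corollary \ref{2.14} is legitimate, which is guaranteed by the convention on $\dim 0$ fixed earlier in the paper. The proof is therefore essentially a one-line appeal to Corollary \ref{2.14} (or, alternatively, a combined appeal to Corollary \ref{mel} and the direct limit description of $\lc^{i}_\Phi$).
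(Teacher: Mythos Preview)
Your proposal is correct and matches the paper's own proof exactly: the paper's entire argument is ``Take $k=-1$ in Corollary \ref{2.14}.'' Your additional self-contained alternative via Corollary \ref{mel} and the direct-limit description is a nice elaboration, but unnecessary for the purposes of the paper.
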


\begin{proof}
	Take $k=-1$	in Corollary \ref{2.14}.
\end{proof}


The following corollary shows that union of supports of general local cohomology modules at initial points $i<t$ is equal to union of supports of its all ordinary local cohomology modules at initial points $i<t$ while for each non-negative integer $i$,  Since $\lc^{i}_\Phi(M)\cong\underset { \fa \in \Phi}
\varinjlim \lc^{i}_\fa(M),$ by \cite[Lemma 2.1]{BZ1}, it is easy to see that $\Supp_R(\lc^{i}_\Phi(M))
\subseteq\underset{\fa \in \Phi}\bigcup \Supp_R(\lc^{i}_\fa(M))$.

\begin{cor} \label {2.16}
	Let $M$ be an be an arbitrary $R$--module and $t$ be a non-negative integer. Then
	\begin{center}
		$\underset{i<t}\bigcup \Supp_R(\lc^{i}_{\Phi}(M))= \underset{\fa\in\Phi,i<t}\bigcup \Supp_R(\lc^{i}_{\fa}(M)).$
	\end{center}
\end{cor}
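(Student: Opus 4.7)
The inclusion $\subseteq$ is exactly what the excerpt already notes from the direct-limit description $\lc^{i}_{\Phi}(M) \cong \varinjlim_{\fa\in\Phi} \lc^{i}_{\fa}(M)$, applied termwise for each $i < t$. The whole game is thus to prove the reverse inclusion
$$\bigcup_{\fa\in\Phi,\,i<t} \Supp_R(\lc^{i}_{\fa}(M)) \subseteq \bigcup_{i<t} \Supp_R(\lc^{i}_{\Phi}(M)),$$
and my plan is to deduce it by a localization argument from the vanishing principle of Corollary \ref{cor2.15}.

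First, I would fix $\fp \in \Spec(R)$ and show the key equivalence
$$\fp \in \bigcup_{i<t} \Supp_R(\lc^{i}_{\Phi}(M)) \iff \fp \in \bigcup_{\fa\in\Phi,\,i<t} \Supp_R(\lc^{i}_{\fa}(M)),$$
from which the claimed equality is immediate. To pass to $R_\fp$, I would use two standard facts. The first is that $\Phi_\fp := \{\fa R_\fp : \fa \in \Phi\}$ is a system of ideals of $R_\fp$: indeed, if $\fb \subseteq \fa_1\fa_2$ in $R$ with $\fb \in \Phi$, then $\fb R_\fp \subseteq (\fa_1 R_\fp)(\fa_2 R_\fp)$. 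The second is that localization commutes with general local cohomology, which follows by combining the direct-limit description $\lc^{i}_{\Phi}(M) \cong \varinjlim_{\fa\in\Phi} \lc^{i}_{\fa}(M)$ with the ordinary commutation $(\lc^{i}_{\fa}(M))_\fp \cong \lc^{i}_{\fa R_\fp}(M_\fp)$ and the exactness of localization, giving
$$(\lc^{i}_{\Phi}(M))_\fp \cong \varinjlim_{\fa\in\Phi} \lc^{i}_{\fa R_\fp}(M_\fp) \cong \lc^{i}_{\Phi_\fp}(M_\fp).$$

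With these in hand, the equivalence becomes transparent. For the nontrivial direction, assume $\fp$ is not in the left-hand union, i.e.\ $(\lc^{j}_{\Phi}(M))_\fp = 0$ for every $j < t$. By the commutation above this reads $\lc^{j}_{\Phi_\fp}(M_\fp) = 0$ for all $j < t$. Applying Corollary \ref{cor2.15} to the ring $R_\fp$, the system $\Phi_\fp$, and the $R_\fp$-module $M_\fp$, I conclude that $\lc^{j}_{\fb R_\fp}(M_\fp) = 0$ for all $j < t$ and every $\fb \in \Phi$, i.e.\ $(\lc^{j}_{\fb}(M))_\fp = 0$ for all such $j$ and $\fb$. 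Hence $\fp$ also lies outside the right-hand union, finishing the contrapositive. The converse direction is precisely the inclusion already noted.

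The only real subtlety is making sure Corollary \ref{cor2.15} is legitimately applicable to $(R_\fp, \Phi_\fp, M_\fp)$; that corollary is stated for an arbitrary module over an arbitrary commutative Noetherian ring and any system of ideals, so the verification reduces to the two bullet points above (that $\Phi_\fp$ is a system of ideals and that localization commutes with $\lc^{i}_{\Phi}(-)$). Everything else is bookkeeping, so I expect no genuine obstacle beyond these checks.
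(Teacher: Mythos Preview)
Your proposal is correct and follows essentially the same approach as the paper: localize at a prime $\fp$, use that $\Phi_\fp$ is a system of ideals of $R_\fp$ and that localization commutes with $\lc^{i}_{\Phi}(-)$, then invoke Corollary \ref{cor2.15} over $R_\fp$. The paper packages this as a single chain of equivalences rather than separating out the easy inclusion first, but the content is identical.
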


\begin{proof}
	By Corollary \ref {cor2.15}, we have\\
	\centerline{$\begin{matrix}
		\fp\notin \underset{i<t}\bigcup \Supp_R(\lc^{i}_{\Phi}(M))
		&\Leftrightarrow &\forall i< t;&\lc^{i}_{\Phi}(M)_{\fp}= 0\\
		&\Leftrightarrow &\forall i< t;&\lc^{i}_{{\Phi}_{\fp}}(M_{\fp})= 0\\
		&\Leftrightarrow &\forall{\fa}R_{\fp}\in\Phi_{\fp}, i< t;&\lc^{i}_{{\fa}R_{\fp}}(M_{\fp})= 0\\
		&\Leftrightarrow &\forall\fa\in\Phi, i<t;&\lc^{i}_{\fa}(M)_{\fp}= 0\\
		&\Leftrightarrow &&\fp\notin \underset{\fa\in\Phi, i<t}\bigcup \Supp_R(\lc^{i}_{\fa}(M)),
		\end{matrix}$}\\
	as we desired.
\end{proof}

\section{Some applications and identities}


As applicatios of  Theorems \ref{2.1} and \ref{2.8}, we can state  Corollaries \ref{4.1}, \ref{4.2} and \ref{4.3}. 

The following corollary is a generalization of \cite[Theorem 4.1(c)]{AM}.

\begin{cor}\label{4.1}
	Let $N$ be a $\Phi$-torsion and finitely generated $R$-module. Let $M$ be an arbitrary $R$-module and $t$ a non-negative integer such that $\Ext^{j-i}_R(N,\lc^{i}_\Phi(M))$ is in $\mathcal{S}$  for all $i,j$ with $0\leq i\leq t-1$ and  $j =t,t+1$. Then
	$\Ext^{t}_R(N,M)$ is in $ \mathcal{S}$ if and only if  $\Hom_R(N,\lc^{t}_\Phi(M))$ is in $\mathcal{S}$.
\end{cor}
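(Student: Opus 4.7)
The plan is to derive both implications directly from the two pillar theorems of Section 2, namely Theorem \ref{2.1} and Theorem \ref{2.8}, applied with suitable parameter choices. The hypothesis on $\Ext^{j-i}_R(N,\lc^{i}_\Phi(M))$ being in $\mathcal{S}$ for $0\le i\le t-1$ and $j\in\{t,t+1\}$ is tailored exactly to supply the two indexed families of conditions that these theorems demand, and the bi-implication will come from this single hypothesis plus one side of the target statement.

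For the implication $\Hom_R(N,\lc^{t}_\Phi(M))\in \mathcal{S} \Rightarrow \Ext^{t}_R(N,M)\in \mathcal{S}$, I would invoke Theorem \ref{2.1} directly. That theorem requires $\Ext^{t-i}_R(N,\lc^{i}_\Phi(M))\in \mathcal{S}$ for all $0\le i\le t$. For $0\le i\le t-1$ this is precisely the $j=t$ instance of the hypothesis, and for $i=t$ the required module is $\Hom_R(N,\lc^{t}_\Phi(M))$, which is in $\mathcal{S}$ by the assumed side. Since $N$ is $\Phi$-torsion, Theorem \ref{2.1} applies and yields $\Ext^{t}_R(N,M)\in \mathcal{S}$.

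For the reverse implication $\Ext^{t}_R(N,M)\in \mathcal{S} \Rightarrow \Hom_R(N,\lc^{t}_\Phi(M))\in \mathcal{S}$, I would apply Theorem \ref{2.8} with $s=0$ (and the theorem's $t$ equal to our $t$). Condition (i) of that theorem becomes $\Ext^{t}_R(N,M)\in \mathcal{S}$, which is the assumed side. Condition (ii) requires $\Ext^{1+i}_R(N,\lc^{t-i}_\Phi(M))\in \mathcal{S}$ for $1\le i\le t$; substituting $k=t-i$ rewrites this as $\Ext^{t+1-k}_R(N,\lc^{k}_\Phi(M))\in \mathcal{S}$ for $0\le k\le t-1$, which is exactly the $j=t+1$ instance of the hypothesis. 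Condition (iii) is vacuous for $s=0$, so Theorem \ref{2.8} gives $\Ext^{0}_R(N,\lc^{t}_\Phi(M))=\Hom_R(N,\lc^{t}_\Phi(M))\in \mathcal{S}$.

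No genuine obstacle arises here; the whole proof is a matter of matching indices correctly, and the main thing to be careful about is the re-indexing in the $\Rightarrow$ direction (so that the $i$ of Theorem \ref{2.8}(ii) lines up with the $i$ of the present hypothesis). The $\Phi$-torsion assumption on $N$ is needed in both applications, and the finite generation hypothesis on $N$ — while not strictly used in the two invocations above — is carried along because the corollary is phrased as a generalization of \cite[Theorem 4.1(c)]{AM}, where it is part of the natural setting.
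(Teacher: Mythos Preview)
Your proposal is correct and takes essentially the same approach as the paper, which simply says the corollary follows from Theorem~\ref{2.1} and Corollary~\ref{2.11} (the latter being exactly Theorem~\ref{2.8} with $s=0$). Your index-matching is accurate, and your remark that the finite generation hypothesis on $N$ is not actually used in the argument is also correct.
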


\begin{proof}
	Follows by  Theorems \ref{2.1} and \ref{2.11}.
\end{proof}

\begin{cor}\label{4.2}
	Let $N$ be a $\Phi$-torsion and finitely generated $R$-module. Let $M$ be an arbitrary $R$-module and $s,t$ non-negative integers such that $\lc^{i}_\Phi(M)$ is in $\mathcal{S}$ for all $i$ with $0\leq i\leq t-1$ or $t+1\leq i\leq s+t$. Then  $\Ext^{s+t}_R(N,M)$  is in $\mathcal{S}$ if and only if 
	$\Ext_{R}^{s}(N, \lc^{t}_\Phi(M))  \in \mathcal{S}$.
\end{cor}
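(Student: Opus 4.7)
The plan is to obtain both implications by running the two Serre-subcategory engines of Section~2 in opposite directions: Theorem~\ref{2.8} will give $(\Rightarrow)$ and Theorem~\ref{2.1} will give $(\Leftarrow)$. A preliminary observation that I will use throughout is that whenever $X\in\mathcal{S}$, one has $\Ext^{j}_{R}(N,X)\in\mathcal{S}$ for every $j\ge 0$: since $N$ is finitely generated over the Noetherian ring $R$, it admits a resolution by finitely generated free modules, so $\Ext^{j}_{R}(N,X)$ is a subquotient of a finite direct sum of copies of $X$ and thus belongs to $\mathcal{S}$.

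For $(\Rightarrow)$, assume $\Ext^{s+t}_{R}(N,M)\in\mathcal{S}$ and apply Theorem~\ref{2.8} at the indices $(s,t)$. Hypothesis~(i) of that theorem is the assumption itself. For~(ii), as $i$ runs over $1\le i\le t$ the value $t-i$ sweeps $\{0,\dots,t-1\}$, so $\lc^{t-i}_{\Phi}(M)\in\mathcal{S}$ by hypothesis of the corollary, and the preliminary observation promotes this to $\Ext^{s+1+i}_{R}(N,\lc^{t-i}_{\Phi}(M))\in\mathcal{S}$. For~(iii), when $1\le i\le s-1$ the index $t+i$ lies in $[t+1,s+t-1]\subseteq[t+1,s+t]$, which is again covered by the hypothesis, so $\Ext^{s-1-i}_{R}(N,\lc^{t+i}_{\Phi}(M))\in\mathcal{S}$ for the same reason. (If $s\in\{0,1\}$ the range in~(iii) is empty, so the condition is vacuous.) Theorem~\ref{2.8} then yields $\Ext^{s}_{R}(N,\lc^{t}_{\Phi}(M))\in\mathcal{S}$.

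For $(\Leftarrow)$, assume $\Ext^{s}_{R}(N,\lc^{t}_{\Phi}(M))\in\mathcal{S}$ and invoke Theorem~\ref{2.1} with exponent $s+t$. I must verify $\Ext^{(s+t)-i}_{R}(N,\lc^{i}_{\Phi}(M))\in\mathcal{S}$ for every $0\le i\le s+t$. Split into three subranges: for $0\le i\le t-1$ and for $t+1\le i\le s+t$ the hypothesis gives $\lc^{i}_{\Phi}(M)\in\mathcal{S}$, and the preliminary observation lifts this to membership of the relevant $\Ext$; the single remaining index $i=t$ asks exactly for $\Ext^{s}_{R}(N,\lc^{t}_{\Phi}(M))\in\mathcal{S}$, which is the standing assumption. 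Theorem~\ref{2.1} then concludes that $\Ext^{s+t}_{R}(N,M)\in\mathcal{S}$.

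No serious obstacle is anticipated: the two gaps (``$0\le i\le t-1$ or $t+1\le i\le s+t$'') in the hypothesis are tailored exactly to the two side-ranges in conditions~(ii) and~(iii) of Theorem~\ref{2.8}, and the symmetric route via Theorem~\ref{2.1} consumes the same data. The only routine care is to note that $N$ is $\Phi$-torsion (so both theorems apply) and to handle the degenerate cases $s=0,1$ in which condition~(iii) of Theorem~\ref{2.8} is vacuous.
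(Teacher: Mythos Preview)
Your proof is correct and follows exactly the approach indicated in the paper, which simply says ``Follows by Theorems~\ref{2.1} and~\ref{2.8}.'' You have filled in the routine verifications (including the standard fact that $\Ext^{j}_{R}(N,X)\in\mathcal{S}$ whenever $X\in\mathcal{S}$ and $N$ is finitely generated) that the paper leaves implicit.
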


\begin{proof}
	Follows by  Theorems \ref{2.1} and \ref{2.8}.
\end{proof}


The following Corollary is a generalization of \cite[Theorem 2.8]{ATV}.

\begin{cor}\label{4.3}
	Let $M$ be an $R$-module and $s,t$  be non-negative integers such that $t \leq s$. Assume also that $\lc^{i}_\Phi(M)$ is in $\mathcal{S}$,  for all  $i$, $i \neq t$ \rm(respectively  $0 \leq i \leq t-1$ or $t+1 \leq i \leq s $\rm). Then for all $i$, $i \geq t$ \rm ( respectively $0 \leq i \leq t-s$ \rm)  $\Ext_{R}^{i}(N , \lc^{t}_\Phi(M)) $  is in $\mathcal{S}$ if and only if 
	$\Ext_{R}^{i+t}(N , M)  \in \mathcal{S} $.
\end{cor}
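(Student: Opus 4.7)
The plan is to deduce both directions directly from Theorems~\ref{2.1} and~\ref{2.8}, matching the running index $i$ appropriately against the parameters in those theorems (taking $N$ to be a $\Phi$-torsion module as required there).

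For the implication ``$\Ext^{i}_R(N,\lc^{t}_\Phi(M))\in\mathcal{S}\Rightarrow\Ext^{i+t}_R(N,M)\in\mathcal{S}$'', I would apply Theorem~\ref{2.1} with its parameter $t$ replaced by $i+t$. We need $\Ext^{(i+t)-j}_R(N,\lc^{j}_\Phi(M))\in\mathcal{S}$ for every $0\le j\le i+t$. When $j\neq t$, the standing hypothesis gives $\lc^{j}_\Phi(M)\in\mathcal{S}$, so any Ext into it lies in $\mathcal{S}$ (because $\mathcal{S}$ is a Serre subcategory, hence closed under quotients of direct sums or, equivalently, under $\Ext^{\bullet}_R(N,-)$ applied to objects of $\mathcal{S}$). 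When $j=t$, the required term is exactly the hypothesis $\Ext^{i}_R(N,\lc^{t}_\Phi(M))\in\mathcal{S}$. Theorem~\ref{2.1} then delivers $\Ext^{i+t}_R(N,M)\in\mathcal{S}$.

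For the reverse implication, I would invoke Theorem~\ref{2.8} with the pair $(s,t)$ there replaced by $(i,t)$. Hypothesis~(i) of Theorem~\ref{2.8} is precisely the given $\Ext^{i+t}_R(N,M)\in\mathcal{S}$. For~(ii), the indices $t-k$ with $1\le k\le t$ run through $\{0,1,\dots,t-1\}$, all distinct from $t$, so $\lc^{t-k}_\Phi(M)\in\mathcal{S}$ and hence $\Ext^{i+1+k}_R(N,\lc^{t-k}_\Phi(M))\in\mathcal{S}$. For~(iii), the indices $t+k$ with $1\le k\le i-1$ are all strictly greater than $t$, so once again $\lc^{t+k}_\Phi(M)\in\mathcal{S}$ and the required Ext lands in $\mathcal{S}$. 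Theorem~\ref{2.8} then yields $\Ext^{i}_R(N,\lc^{t}_\Phi(M))\in\mathcal{S}$.

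The only genuine bookkeeping point, and the step I expect to require the most care, is the restricted ``respectively'' version in which $\lc^{j}_\Phi(M)\in\mathcal{S}$ is only assumed in the range $0\le j\le t-1$ or $t+1\le j\le s$. There one has to restrict the values of $i$ so that every index $t+k$ appearing in condition~(iii) of Theorem~\ref{2.8} still lies in $[t+1,s]$; the tightest constraint comes from the largest invoked index $t+i-1\le s$, which cuts $i$ down to the stated bounded range. Once this range check is done, the two applications of Theorems~\ref{2.1} and~\ref{2.8} go through verbatim as above.
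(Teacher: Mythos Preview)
Your approach is exactly the paper's: it simply says ``Easily shown with Theorems~\ref{2.1} and~\ref{2.8}'', and you have correctly identified how to instantiate each theorem.

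Two small points deserve tightening. First, the step ``$\lc^{j}_\Phi(M)\in\mathcal{S}$ implies $\Ext^{k}_R(N,\lc^{j}_\Phi(M))\in\mathcal{S}$'' is not a formal property of Serre subcategories in general; it needs $N$ finitely generated (so that $\Ext^{k}_R(N,-)$ is a subquotient of a \emph{finite} direct sum of copies of its argument). This hypothesis on $N$ is implicit from Corollaries~\ref{4.1}--\ref{4.2} but is worth stating, since your parenthetical ``closed under quotients of direct sums'' is false for infinite sums. Second, in the restricted version the sharpest constraint on $i$ does not come from condition~(iii) of Theorem~\ref{2.8} (which only forces $t+i-1\le s$), but from your application of Theorem~\ref{2.1}: there you need $\lc^{j}_\Phi(M)\in\mathcal{S}$ for all $j\ne t$ with $0\le j\le i+t$, so the top index $i+t$ must lie in the hypothesised range, forcing $i+t\le s$, i.e.\ $i\le s-t$. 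That is the bound the statement intends (the printed ``$t-s$'' is evidently a typo for ``$s-t$'').
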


\begin{proof}
Easily shown with  Theorems \ref{2.1} and \ref{2.8}.
\end{proof}


In the following theorem, for non-negative integers $s$ and $t$, we find some
sufficient conditions for validity of the isomorphism  $\Ext^{s+t}_R(N,M)\cong \Ext^{s}_R(N ,\lc^{t}_\Phi(M))$. It is a generalization of \cite[Theorem 3.5]{ATV}.

\begin{thm}\label{zero}
Let $N$ be a $\Phi$-torsion and finitely generated $R$-module. Let  $M$ be an arbitrary  $R$-module and $s , t$  be non-negative integers. Assume also as follows:
\begin{itemize}
\item[(i)] $\Ext^{s+t-i}_R(N ,\lc^{i}_\Phi(M)) = 0$ for all $i , 0 \leq i < t$ or $t+1 \leq i \leq s+t$;
\item[(ii)] $\Ext^{s+1+i}_R(N ,\lc^{t-i}_\Phi(M)) = 0$ for all $i , 0 \leq i \leq t$;
\item[(iii)] $\Ext^{s-1-i}_R(N ,\lc^{t+i}_\Phi(M)) = 0$ for all $i , 0 \leq i \leq s-1$.
\end{itemize}
Then we have  $\Ext^{s+t}_R(N,M))\cong \Ext^{s}_R(N ,\lc^{t}_\Phi(M))$.
\end{thm}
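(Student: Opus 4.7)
The plan is to prove the isomorphism by induction on $t$, following the same dimension-shifting strategy used in Theorems \ref{2.1} and \ref{2.8}. Set $\overline{M}=M/\Gamma_\Phi(M)$ and $L=E(\overline{M})/\overline{M}$, where $E(\overline{M})$ is an injective hull of $\overline{M}$. Because $\Gamma_\Phi(\overline{M})=0=\Gamma_\Phi(E(\overline{M}))$, and because $N$ is $\Phi$-torsion (so in particular $\Hom_R(N,E(\overline{M}))=0$), the short exact sequence $0\to\overline{M}\to E(\overline{M})\to L\to 0$ yields two key shift isomorphisms: $\lc^{i}_\Phi(L)\cong\lc^{i+1}_\Phi(M)$ and $\Ext^{i}_R(N,\overline{M})\cong\Ext^{i-1}_R(N,L)$ for every $i\geq 1$.

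For the base case $t=0$, I would establish $\Ext^{s}_R(N,M)\cong\Ext^{s}_R(N,\Gamma_\Phi(M))$ by chasing the long exact sequence associated to $0\to\Gamma_\Phi(M)\to M\to\overline{M}\to 0$. The desired isomorphism follows once $\Ext^{s-1}_R(N,\overline{M})=0$ and $\Ext^{s}_R(N,\overline{M})=0$, which via the shift isomorphism reduce to $\Ext^{s-2}_R(N,L)=0$ and $\Ext^{s-1}_R(N,L)=0$; each of these in turn follows from Theorem \ref{2.1} applied to $L$ (with $\mathcal{S}=\{0\}$), whose hypotheses translate, under the re-indexing $\lc^j_\Phi(L)\cong\lc^{j+1}_\Phi(M)$, directly into hypotheses (iii) and (i) of the present theorem.

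For the inductive step, assume the result for $t-1$ and prove it for $t\geq 1$. From the long exact sequence of $0\to\Gamma_\Phi(M)\to M\to\overline{M}\to 0$, together with the vanishings $\Ext^{s+t}_R(N,\Gamma_\Phi(M))=0$ (the case $i=0$ of hypothesis (i)) and $\Ext^{s+t+1}_R(N,\Gamma_\Phi(M))=0$ (the case $i=t$ of hypothesis (ii)), I would conclude $\Ext^{s+t}_R(N,M)\cong\Ext^{s+t}_R(N,\overline{M})\cong\Ext^{s+t-1}_R(N,L)$. Then the induction hypothesis applied to $L$ with the pair $(s,t-1)$ would give $\Ext^{s+t-1}_R(N,L)\cong\Ext^{s}_R(N,\lc^{t-1}_\Phi(L))\cong\Ext^{s}_R(N,\lc^{t}_\Phi(M))$, composing to the desired isomorphism. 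A careful but routine check shows that after the shift $j\mapsto j+1$, each of the three hypotheses (i), (ii), (iii) required on $L$ is contained in the corresponding hypothesis on $M$.

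The only real obstacle is the bookkeeping: verifying that every vanishing hypothesis needed for $L$ after the index shift is indeed implied by one of the stated hypotheses on $M$, and handling low-degree boundary cases (especially $s=0$, where parts of hypothesis (iii) become vacuous but the long exact sequence arguments still go through trivially). Apart from this accounting, the argument is a direct extension of the dimension-shift techniques already developed in Theorems \ref{2.1} and \ref{2.8}.
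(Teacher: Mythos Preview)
Your proposal is correct and follows essentially the same induction-on-$t$, dimension-shifting approach as the paper. The only cosmetic difference is in the base case $t=0$: the paper applies Theorem~\ref{2.1} directly to $\overline{M}$ (using $\Gamma_\Phi(\overline{M})=0$ so that the $i=0$ input is trivially zero, while the remaining inputs come straight from hypotheses (iii) and (i)), whereas you shift once more to $L$ before invoking Theorem~\ref{2.1}; both routes yield the needed vanishing of $\Ext^{s-1}_R(N,\overline{M})$ and $\Ext^{s}_R(N,\overline{M})$.
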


\begin{proof}
We prove by using induction on $t$. Let $t = 0$. We have
\begin{center}
 $\Ext^{s-1}_R(N , M / \Gamma_{\Phi}(M)) = 0 = 
\Ext^{s}_R(N , M / \Gamma_{\Phi}(M))$
\end{center}
  from hypothesis $(iii)$ and $(i)$, and Theorem \ref{2.1} with $\mathcal{S}=\{0\}$. Now, the assertion follows by the exact sequence

\begin{center}
$\Ext_{R}^{s-1}(N, M/\Gamma_{\Phi}(M))\lo \Ext^{s}_{R}(N, \Gamma_{\Phi}(M))\lo \Ext^{s}_{R}(N, M)\lo \Ext^{s}_{R}(N, M/\Gamma_{\Phi}(M))$
\end{center}

obtained from the short exact sequence
\begin{center}
$0\lo \Gamma_{\Phi}(M)\lo M\lo M/\Gamma_{\Phi}(M)\lo 0$.
\end{center}

Assume that $t> 0$ and that $t- 1$ is settled. By considering the short exact sequence
\begin{center}
	$0\rightarrow \overline{M}\rightarrow E(\overline{M})\rightarrow L\rightarrow 0,$
\end{center}	
the rest of proof is similar to that of Theorem \ref {2.1}. 
\end{proof}




The following corollary is a generalization of \cite[Corollary 3.8]{ATV}.

\begin{cor}
Let $M$ be an arbitrary $R$-module, $\fa$ an deal of $R$ and $\fp \in \bigcup_{\fa \in \Phi} \V(\fa)$. Suppose that $\Ext^{s+t-i}_R(R / \fp,\lc^{i}_\Phi(M)) =0$ for all $i$ with $0 \leq i < t$ or $t+1 \leq i \leq s+t$, 
$\Ext^{s++1+i}_R(R / \fp,\lc^{i}_\Phi(M)) =0$ for all  $i , 0 \leq i \leq t$, and   $\Ext^{s-1-i}_R(R / \fp,\lc^{i}_\Phi(M)) =0$ for all  $i , 0 \leq i \leq s-1$. Then $\mu^{s+t}(\fp , M) = \mu^{s}(\fp , \lc^{t}_\Phi(M)) $.
\end{cor}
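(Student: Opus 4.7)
The plan is to derive this corollary as an immediate specialization of Theorem \ref{zero} to the case $N = R/\fp$, followed by a localization-and-dimension-count step to translate the $\Ext$-isomorphism into the Bass number equality.

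First, I would observe that $N = R/\fp$ is finitely generated, and since $\fp \in \bigcup_{\fa \in \Phi} \V(\fa)$ there exists some $\fa \in \Phi$ with $\fa \subseteq \fp$, which gives $\fa(R/\fp) = 0$. Hence $R/\fp$ is $\fa$-torsion, and in particular $R/\fp = \G_\fa(R/\fp) \subseteq \G_\Phi(R/\fp)$, so $R/\fp$ is $\Phi$-torsion. This places us squarely in the hypothesis of Theorem \ref{zero}. The three vanishing conditions stated here are exactly conditions (i)–(iii) of that theorem (after matching the index shifts $t-i$ and $t+i$), so Theorem \ref{zero} applied with $N = R/\fp$ yields the isomorphism
\[
\Ext^{s+t}_R(R/\fp, M) \cong \Ext^{s}_R(R/\fp, \lc^{t}_\Phi(M)).
\]

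Next, I would localize this isomorphism at the prime $\fp$. Using the standard identification $\Ext^{i}_R(R/\fp, X)_\fp \cong \Ext^{i}_{R_\fp}(k(\fp), X_\fp)$, where $k(\fp) = R_\fp/\fp R_\fp$ is the residue field (valid because $R/\fp$ is finitely generated, so $\Ext$ commutes with localization), we obtain
\[
\Ext^{s+t}_{R_\fp}\bigl(k(\fp), M_\fp\bigr) \cong \Ext^{s}_{R_\fp}\bigl(k(\fp), \lc^{t}_\Phi(M)_\fp\bigr).
\]

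Finally, taking $k(\fp)$-vector space dimensions on both sides, the left side is $\mu^{s+t}(\fp, M)$ and the right side is $\mu^{s}(\fp, \lc^{t}_\Phi(M))$ by definition of the Bass numbers. This yields the desired equality.

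The whole argument is essentially formal once Theorem \ref{zero} is in hand; there is no real obstacle here, since the subtle induction work has already been done in proving Theorem \ref{zero}. The only minor point to be careful of is verifying that $R/\fp$ genuinely qualifies as a $\Phi$-torsion finitely generated module, which is immediate from the assumption $\fp \in \bigcup_{\fa \in \Phi} \V(\fa)$.
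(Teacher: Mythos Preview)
Your proposal is correct and follows essentially the same route as the paper's proof: apply Theorem~\ref{zero} with $N=R/\fp$ (which is $\Phi$-torsion because $\fp\in\bigcup_{\fa\in\Phi}\V(\fa)$), then localize at $\fp$ and read off Bass numbers. The paper compresses this into a single line, writing the localized $\Ext$-isomorphism over $R_\fp$ directly as a consequence of Theorem~\ref{zero}, whereas you first obtain the isomorphism over $R$ and then localize; the difference is purely expository.
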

\begin{proof}
Note that by Theorem \ref{zero}, $\Ext^{s+t}_{R_{\fp}}(R_{\fp} / \fp R_{\fp}, M_{\fp})\cong \Ext^{s}_{R_{\fp}}(R_{\fp} / \fp R_{\fp} ,
\lc^{t}_\Phi(M)_{\fp})$ . Therefore  $\mu^{s+t}(\fp , M) = \mu^{s}(\fp , \lc^{t}_\Phi(M))$. 

\end{proof}

\begin{cor}\label{3.3}
Let $M$ be an arbitrary $R$-module and $N$ be a finitely generated  $\fa$-torsion $R$-module for some $\fa \in \Phi $. Suppose that $\Ext^{s+t-i}_R(N,\lc^{i}_\Phi(M)) =0$ for all $i , 0 \leq i < t$ or $t+1 \leq i \leq s+t$, 
$\Ext^{s+1+i}_R(N,\lc^{i}_\Phi(M)) =0$ for all  $i , 0 \leq i \leq t$, and   $\Ext^{s-1-i}_R(N,\lc^{i}_\Phi(M)) =0$ for all  $i , 0 \leq i \leq s-1$. Then: 
\begin{center}
$\Ext^{s}_R(N,\lc^{t}_\Phi(M)) \cong \Ext^{s+t}_R(N,M) \cong \lc^{s+t}_\fa(N,M)  \cong \Ext^{s}_R(N,\lc^{t}_\fa(M))$
\end{center}
\end{cor}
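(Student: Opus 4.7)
The plan is to prove the four-term chain by establishing each displayed isomorphism in turn. First observe that $N$ is $\Phi$-torsion, since $\fa\in\Phi$ and $N$ being $\fa$-torsion give $N=\G_\fa(N)\subseteq\G_\Phi(N)$; all $\Phi$-torsion hypotheses of Section 2 are thus available.

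The first isomorphism $\Ext^s_R(N,\lc^t_\Phi(M))\cong\Ext^{s+t}_R(N,M)$ is then immediate from Theorem \ref{zero}, whose three hypotheses are precisely the vanishings displayed in the corollary. For the middle isomorphism $\Ext^{s+t}_R(N,M)\cong\lc^{s+t}_\fa(N,M)$, I would invoke the standard stabilization for generalized local cohomology of a finitely generated torsion module: since $N$ is finitely generated $\fa$-torsion, some power $\fa^{n_0}$ annihilates $N$, so $N/\fa^k N=N$ for all $k\geq n_0$ and therefore $\lc^{s+t}_\fa(N,M)=\varinjlim_k \Ext^{s+t}_R(N/\fa^k N,M)=\Ext^{s+t}_R(N,M)$. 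This is the content of \cite[Theorem 2.5]{VA}, already invoked in the proof of Corollary \ref{2.3}.

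The last isomorphism $\lc^{s+t}_\fa(N,M)\cong \Ext^s_R(N,\lc^t_\fa(M))$ is the crux. My plan is to apply the ordinary-local-cohomology analogue of Theorem \ref{zero} --- namely \cite[Theorem 3.5]{ATV}, of which Theorem \ref{zero} is a direct generalization --- with the system $\{\fa^n\}_n$ in place of $\Phi$; this yields the desired iso under the corresponding vanishing of $\Ext^\bullet_R(N,\lc^\bullet_\fa(M))$. The main obstacle is that these $\fa$-level vanishings are not literally among our hypotheses; only the $\Phi$-level ones are. The approach is to re-run the induction of Theorem \ref{zero} verbatim with $\G_\fa$ replacing $\G_\Phi$, using the same pair of short exact sequences $0\to\G_\fa(M)\to M\to \overline{M}\to 0$ and $0\to\overline{M}\to E(\overline{M})\to L\to 0$, and to supply the $\fa$-level Ext-vanishings needed as inputs by combining the given $\Phi$-level vanishings with Corollaries \ref{mel} and \ref{2.6} applied with Serre subcategory $\mathcal{S}=\{0\}$ (so that vanishing of $\lc^i_\Phi(M)$ extracted from the Ext-data is passed to vanishing of $\lc^i_\fa(M)$, which then trivially kills the relevant $\Ext^{\bullet}_R(N,\lc^i_\fa(M))$). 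This transfer of vanishing from $\Phi$ to $\fa$ is the delicate step on which the whole argument hinges.
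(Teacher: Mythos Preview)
Your handling of the first two isomorphisms matches the paper: Theorem~\ref{zero} gives $\Ext^{s}_R(N,\lc^{t}_\Phi(M))\cong\Ext^{s+t}_R(N,M)$, and the stabilization $\lc^{s+t}_\fa(N,M)\cong\Ext^{s+t}_R(N,M)$ for finitely generated $\fa$-torsion $N$ is exactly what the paper cites as \cite[Lemma~2.5(c)]{VA}.

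The gap is in the last isomorphism. Your plan is to run the $\fa$-version of the induction in Theorem~\ref{zero} and manufacture the needed $\fa$-level inputs $\Ext^{\bullet}_R(N,\lc^{\bullet}_\fa(M))=0$ by first ``extracting'' $\lc^{i}_\Phi(M)=0$ from the hypotheses and then pushing this to $\lc^{i}_\fa(M)=0$ via Corollaries~\ref{2.6} or~\ref{mel} with $\mathcal{S}=\{0\}$. That extraction step fails. The hypotheses of the corollary give vanishing only along three thin diagonals in the $(i,j)$-grid of $\Ext^{j}_R(N,\lc^{i}_\Phi(M))$; they do not force any $\lc^{i}_\Phi(M)$ to be zero, and they do not provide the full lower-triangular block $\{(i,j): i<t,\ j<t-i\}$ that Corollary~\ref{2.6} requires. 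Consequently you cannot conclude $\lc^{i}_\fa(M)=0$ for any $i$, and without that the $\fa$-analogue of Theorem~\ref{zero} (equivalently \cite[Theorem~3.5]{ATV}) has no verified hypotheses. The ``delicate step'' you flag is not merely delicate; as written it does not go through.

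The paper does not attempt this transfer at all. It obtains the remaining isomorphism $\lc^{s+t}_\fa(N,M)\cong\Ext^{s}_R(N,\lc^{t}_\fa(M))$ by a direct appeal to \cite[Theorem~2.21]{VA}, a companion result for ordinary generalized local cohomology that, combined with \cite[Lemma~2.5(c)]{VA} and the already-established identification $\lc^{s+t}_\fa(N,M)\cong\Ext^{s+t}_R(N,M)$, closes the chain without ever needing to pass Ext-vanishing from $\Phi$ down to $\fa$.
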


\begin{proof}
It follows by Theorem \ref{zero} and \cite[Lemma 2.5(c) and Theorem 2.21]{VA}. Note that $N = \Gamma_\fa (N) \subseteq \Gamma_\Phi(N)$.
\end{proof}


\begin{cor}\label{4.7}
 Let $M$ be an arbitrary $R$-module and $N$ be a finitely generated   $R$-module with $\Supp_R(N) = \V(\fa)$  for some $\fa \in \Phi $.  If $\lc^{i}_\Phi(M) = 0$ for all $0 \leq i < t$,  then:
\begin{center}
	 $\Hom_R(N,\lc^{t}_\Phi(M)) \cong  \Ext^{t}_R(N,M) \cong  \lc^{t}_\fa(N,M) \cong \Hom_R(N,\lc^{t}_\fa(M)).$
\end{center} 
     
\end{cor}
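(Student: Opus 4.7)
The plan is to establish the four-term chain of isomorphisms in two stages, with the central isomorphism $\Hom_R(N,\lc^{t}_\Phi(M))\cong\Ext^{t}_R(N,M)$ proved by induction on $t$, and the remaining two obtained by transferring the same pattern to the ordinary local cohomology functor $\lc^{\bullet}_\fa$ and using the standard collapse of the generalized local cohomology for finitely generated torsion modules.

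First, since $N$ is finitely generated with $\Supp_R(N)=\V(\fa)$, some power of $\fa$ annihilates $N$, so $N=\G_\fa(N)\subseteq\G_\Phi(N)$; in particular $N$ is both $\fa$-torsion and $\Phi$-torsion, which is the ambient torsion hypothesis required in Corollary \ref{3.3}. Moreover, by Corollary \ref{cor2.15} the assumption $\lc^{i}_\Phi(M)=0$ for $0\le i<t$ is equivalent to $\lc^{i}_\fa(M)=0$ for all $\fa\in\Phi$ and $0\le i<t$, which will let me apply the same inductive scheme on both sides.

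For the central isomorphism I induct on $t$. The base $t=0$ is immediate: $N$ is $\Phi$-torsion, so every homomorphism $N\to M$ lands in $\G_\Phi(M)$, whence $\Hom_R(N,M)=\Hom_R(N,\G_\Phi(M))$. For the step, $\G_\Phi(M)=\lc^{0}_\Phi(M)=0$ allows me to embed $M$ into its injective hull $E(M)$ and form $L=E(M)/M$; because $\Ass(E(M))=\Ass(M)$, one has $\G_\Phi(E(M))=0$, so $\lc^{i}_\Phi(L)\cong\lc^{i+1}_\Phi(M)$ for every $i\ge 0$, and in particular $\lc^{i}_\Phi(L)=0$ for $0\le i<t-1$. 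Simultaneously, the long exact sequence of $\Hom_R(N,-)$ applied to $0\to M\to E(M)\to L\to 0$ collapses: $\Hom_R(N,M)=\Hom_R(N,E(M))=0$ (both ambient modules are $\Phi$-torsion-free while $N$ is $\Phi$-torsion) and $\Ext^{i}_R(N,E(M))=0$ for $i\ge 1$, whence $\Ext^{i}_R(N,L)\cong\Ext^{i+1}_R(N,M)$ for all $i\ge 0$. Feeding the induction hypothesis into $L$ with $t-1$ in place of $t$ then gives $\Hom_R(N,\lc^{t}_\Phi(M))\cong\Ext^{t}_R(N,M)$.

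Running the identical induction for the ordinary local cohomology functor $\lc^{\bullet}_\fa$ (legitimate because of the equivalence supplied by Corollary \ref{cor2.15}) yields the companion isomorphism $\Hom_R(N,\lc^{t}_\fa(M))\cong\Ext^{t}_R(N,M)$. The remaining link $\lc^{t}_\fa(N,M)\cong\Ext^{t}_R(N,M)$ follows from the definition $\lc^{t}_\fa(N,M)=\varinjlim_n\Ext^{t}_R(N/\fa^n N,M)$ together with the fact that $\fa^n N=0$ for $n\gg 0$, so the direct system stabilizes at $\Ext^{t}_R(N,M)$; equivalently, one may invoke \cite[Lemma 2.5(c) and Theorem 2.21]{VA} exactly as in the proof of Corollary \ref{3.3}. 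Splicing the three isomorphisms together produces the desired chain. The main delicate point is matching indices at the boundary $i=0$ of the dimension shift $\Ext^{i}_R(N,L)\cong\Ext^{i+1}_R(N,M)$, which is precisely where the joint assumptions $\G_\Phi(M)=0$ and $N$ being $\Phi$-torsion are used to kill $\Hom_R(N,M)$ and collapse the long exact sequence; once this is in place the induction closes routinely.
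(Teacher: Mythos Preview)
Your argument is correct. The paper's proof is a one-liner: it simply specializes Corollary~\ref{3.3} to $s=0$, where the three vanishing hypotheses of that corollary are trivially satisfied because $\lc^{i}_\Phi(M)=0$ for $0\le i<t$ and the remaining index ranges are empty. What you have written is essentially the unfolding of the proof of Theorem~\ref{zero} (the engine behind Corollary~\ref{3.3}) in this special case: the same dimension-shift via $0\to M\to E(M)\to L\to 0$ after killing $\G_\Phi(M)$, the same use of the $\Phi$-torsion-freeness of $E(M)$ to get $\lc^{i}_\Phi(L)\cong\lc^{i+1}_\Phi(M)$ and $\Ext^{i}_R(N,L)\cong\Ext^{i+1}_R(N,M)$, and the same appeal to \cite{VA} for the generalized local cohomology link. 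The one extra ingredient you add is Corollary~\ref{cor2.15}, which lets you run the identical induction for $\lc^{\bullet}_\fa$ rather than relying on the $\fa$-side isomorphism already packaged into Corollary~\ref{3.3}; this makes your proof self-contained but not genuinely different in method.
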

\begin{proof}
Put $s=0$ in Corollary \ref{3.3}.
\end{proof}


\begin{defn}\textsl{{\rm(}\cite[Definition 5.3]{BZ1}{\rm)}}
Let $M$ be a finitely generated $R$-module and $\Phi$ a system of ideals of $R$. We define  grade of $\Phi$ on $M$ as:
\begin{center}
 $\grade(\Phi,M)=\inf\{\grade(\fa,M)| \fa\in \Phi\}$.
\end{center}
\end{defn}
It is easy to see that, if for each $\fa\in \Phi$, $M= {\fa}M$ then, $\grade(\Phi,M)=+\infty$, otherwise we have $\grade(\Phi,M)<+\infty$.

\begin{cor}\label{4.9}
	Let $M$, $N$ be two finitely generated   $R$-modules such that $\Supp_R(N) = \V(\fa)$  for some $\fa \in \Phi $.  If $\grade(\Phi,M)=t$, then 
	\begin{center}
		$\Hom_R(N,\lc^{t}_\Phi(M)) \cong  \Ext^{t}_R(N,M) \cong  \lc^{t}_\fa(N,M) \cong \Hom_R(N,\lc^{t}_\fa(M)).$
	\end{center} 
	\end{cor}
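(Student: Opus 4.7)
The strategy is to reduce Corollary~\ref{4.9} to the already-established Corollary~\ref{4.7}. The bridge is the observation that, for a finitely generated $R$-module $M$, the hypothesis $\grade(\Phi,M)=t$ forces the initial vanishing $\lc^{i}_\Phi(M)=0$ for all $i<t$, which is precisely the hypothesis needed to invoke Corollary~\ref{4.7}.

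First I would unpack the grade hypothesis. By definition, $\grade(\Phi,M)=\inf\{\grade(\fb,M):\fb\in\Phi\}=t$, so $\grade(\fb,M)\geq t$ for every $\fb\in\Phi$. Since $M$ is finitely generated, the classical identification of grade with the least non-vanishing index of local cohomology gives $\lc^{i}_\fb(M)=0$ for all $i<t$ and every $\fb\in\Phi$. Applying Corollary~\ref{cor2.15} (the vanishing local-global principle for general local cohomology), this is equivalent to $\lc^{i}_\Phi(M)=0$ for all $i<t$.

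Next, with this vanishing in hand, I would observe that the remaining hypotheses of Corollary~\ref{4.7} are supplied by the assumptions of Corollary~\ref{4.9}: both $M$ and $N$ are finitely generated, and $\Supp_R(N)=\V(\fa)$ for some $\fa\in\Phi$. Directly applying Corollary~\ref{4.7} to this data produces the chain of isomorphisms
\[
\Hom_R(N,\lc^{t}_\Phi(M)) \cong \Ext^{t}_R(N,M) \cong \lc^{t}_\fa(N,M) \cong \Hom_R(N,\lc^{t}_\fa(M)),
\]
which is exactly the conclusion.

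In short, the proof is essentially a two-line deduction: translate the grade hypothesis into initial vanishing via Corollary~\ref{cor2.15}, then quote Corollary~\ref{4.7}. There is no real obstacle here; the only subtle point worth checking is that the grade-versus-vanishing identification $\grade(\fb,M)=\inf\{i:\lc^{i}_\fb(M)\neq 0\}$ is valid for each $\fb\in\Phi$ because $M$ is finitely generated, which is explicitly assumed. All the heavy lifting has already been done in Sections~2 and~3.
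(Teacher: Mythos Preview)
Your proposal is correct and follows essentially the same route as the paper's own proof: translate $\grade(\Phi,M)=t$ into $\grade(\fb,M)\geq t$ for every $\fb\in\Phi$, hence $\lc^{i}_{\fb}(M)=0$ for all $i<t$, then invoke Corollary~\ref{cor2.15} to obtain $\lc^{i}_{\Phi}(M)=0$ for $i<t$, and finish with Corollary~\ref{4.7}. Your write-up is in fact slightly cleaner, since you correctly record the inequality as $\geq t$ rather than the paper's ``$>t$'' (which is a minor slip there).
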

	
\begin{proof}
Let $\fa \in \Phi$, then $\grade(\fa,M)> t$. Therefore $\lc^{i}_\fa(M)=0$ for all $i<t$. Now the assertion follows by Corollaries \ref{cor2.15} and \ref{4.7}.
\end{proof}

 A subset $\mathcal{Z}$ of $\Spec(R)$ is said to be {\it stable under specialization} if $\V(\fp)\subseteq \mathcal{Z}$ for all $\fp\in \mathcal{Z}$. Let $M$ be an $R$-module, then $\Gamma_{\mathcal{Z}}(M)$ is defined by
 $\Gamma_{\mathcal{Z}}(M):=\{~x\in M\mid \Supp_R(Rx)\subseteq \mathcal{Z}\}$. So $\G_{\Phi}(-)$ is a functor from $\mathscr{C}(R)$ to itself (where
 $\mathscr{C}(R)$ denotes the category of all $R$-modules and all
 $R$-homomorphisms). The functor  $\G_{\mathcal{Z}}(-)$ is additive, covariant,
 $R$-linear and left exact. For
 each $i\geq 0$, the $i$-th right derived functor of $\G_{\mathcal{Z}}(-)$ is denoted by
 $\lc_{\mathcal{Z}}^i(-)$. By \cite[Lemma 3.2]{DNT}, for any stable under specialization subset $\mathcal{Z}$ of $\Spec(R)$, there is a system of ideals $\Phi$ such that for each $R$-module $M$, the $R$-modules $\lc_{\mathcal Z}^{i}(M)$ and $ \lc^{i}_{\Phi}(M)$ are isomorphic. 
 
Let $(R,\fm)$ be a local ring and $\fa$ be an ideal of $R$. Let $M$ be a finitely generated $R$-module with $\dim M=d$. It is well-known that the set $\Supp_R(\lc^{d-1}_\fa(M))$ is finite. We close this paper with the following example. It shows that the similar result is not true for the local cohomology modules with respect to a system of ideals or with respect to an specialization closed subset.

\begin{exam}\label{4.8}
Let $(R, \fm)$  be a Noetherian local domain such that $\dim(R) = 2$. Since $\{0\}$   is a prime ideal of $R$ and $\{0\} \subset \fm$,
so there is infinitely many prime ideals of $R$ between $\{0\}$ and $\fm$ such that $\Ht_R {\fp}=1$.  Let $\mathcal Z =\Spec(R)$ and  $\mathcal Z_n = \lbrace \fp\in \mathcal Z  \mid \Ht_R {\fp}\geq n \rbrace$ for $n=0, 1, 2$. Then $\mathcal Z_2 = \{\fm\}$ and $\mathcal Z_1 =\{ \fp\in \mathcal Z  \mid \Ht_R {\fp}=1\} \cup\{\fm\}$. According to \cite[Corollary 4.3 and Lemma 4.2]{DZ}, for $n=2$, $i=1$ and $X=R$, there is the following exact sequence 

\begin{center}
$\lc^{1}_{\fm}(R) \lo \lc^{1}_{\mathcal Z_1}(R) \lo \underset{{\Ht_R {\fp}=1}}\bigoplus\lc^{1}_{{\fp}R_{\fp}}(R_{\fp})
\lo \lc^{2}_{\fm}(R)$.
\end{center}
By localizing this exact sequence at every $\fp$ with $\Ht_R {\fp}=1$, we get $\lc^{1}_{\fm}(R)_\fp=0=\lc^{2}_{\fm}(R)_\fp$ and $\lc^{1}_{\mathcal Z_1}(R)_\fp\cong \lc^{1}_{{\fp}R_{\fp}}(R_{\fp})$. Since $\lc^{1}_{{\fp}R_{\fp}}(R_{\fp})\neq 0$ by Grothendieck non-vanishing theorem \cite[Theorem 6.1.4]{BSh2}, thus $\fp \in\Supp_R(\lc^{1}_{\mathcal Z_1}(R))$. Also as we mentioned before there is a system of ideals $\Phi$ such that the $R$-modules $\lc_{\mathcal Z_1}^{1}(R)$ and $ \lc^{1}_{\Phi}(R)$ are isomorphic. This shows that $\Supp_R(\lc^{1}_{\mathcal Z_1}(R)) =\Supp_R(\lc^{1}_\Phi(R))$ is an infinite set.
\end{exam}


\begin{center}
{\bf Acknowledgments}
\end{center}

The authors would like to thank Dr. Majid Rahro Zargar for his reading of the first draft and useful discussions.



\bibliographystyle{amsplain}

\end{document}